\documentclass[a4paper,10pt,]{article}
\usepackage{}
\usepackage{hyperref}
\usepackage{mathrsfs}
\usepackage{amsthm}
\usepackage{mathrsfs}
\usepackage{amsfonts}
\usepackage{amsmath}
\usepackage{amssymb}
\usepackage{amsthm}
\usepackage{enumerate}
\usepackage[mathscr]{eucal}
\usepackage{eqlist}
\usepackage{graphicx}

\usepackage[body={16.0true cm, 23.2true cm}]{geometry}

\newtheorem{Theorem}{Theorem}[section]

\newtheorem{Lemma}[Theorem]{Lemma}
\newtheorem{Corollary}[Theorem]{Corollary}
\newtheorem{Remark}[Theorem]{Remark}

\numberwithin{equation}{section} \allowdisplaybreaks
\usepackage{mathdots}

\renewcommand\abstract{{\bf Abstract}}

\begin{document}
\title{Critical points of solutions for mean curvature equation in strictly convex and nonconvex domains\footnote{\footnotesize The work is supported by National Natural Science Foundation of China (No.11401307, No.11401310), High level talent research fund of Nanjing Forestry University (G2014022) and Postgraduate Research \& Practice Innovation Program of Jiangsu Province (KYCX17\_0321). The second author is sponsored by Qing Lan Project of Jiangsu Province.}}

\author{Haiyun Deng$^{1}$\footnote{\footnotesize Corresponding author E-mail: haiyundengmath1989@163.com, Tel.: +86 15877935256.}, Hairong Liu$^{2}$, Long Tian$^{1}$\\[12pt]
\small \emph {$^{1}$School of Science, Nanjing University of Science and Technology, Nanjing, 210094, China;}\\
\small \emph {$^{2}$School of Science, Nanjing Forestry University, Nanjing, 210037, China}\\}
\date{}
\maketitle

\renewcommand{\labelenumi}{[\arabic{enumi}]}

\begin{abstract}{\bf:}
 In this paper, we mainly investigate the set of critical points associated to solutions of mean curvature equation with zero Dirichlet boundary condition in a strictly convex domain and a nonconvex domain respectively. Firstly, we deduce that mean curvature equation has exactly one nondegenerate critical point in a smooth, bounded and strictly convex domain of $\mathbb{R}^{n}(n\geq2)$. Secondly, we study the geometric structure about the critical set $K$ of solutions $u$ for the constant mean curvature equation in a concentric (respectively an eccentric) spherical annulus domain of $\mathbb{R}^{n}(n\geq3)$, and deduce that $K$ exists (respectively does not exist) a rotationally symmetric critical closed surface $S$. In fact, in an eccentric spherical annulus domain, $K$ is made up of finitely many isolated critical points ($p_1,p_2,\cdots,p_l$) on an axis and finitely many rotationally symmetric critical Jordan curves ($C_1,C_2,\cdots,C_k$) with respect to an axis.
\end{abstract}

{\bf Key Words:} mean curvature equation, critical point, nodal set.

{{\bf 2010 Mathematics Subject Classification.} 35B38; 35J93; 53C44. }

\section{Introduction}
~~~~In this paper we consider the following mathematical models with zero Dirichlet boundary condition
\begin{equation}\label{1.1}\begin{array}{l}
\left\{
\begin{array}{l}
Lu=f(u)~~\mbox{in}~~ \Omega,\\
u=0~~~~~~~~\mbox{on}~ \partial\Omega,
\end{array}
\right.
\end{array}\end{equation}
where $f$ is a real value function, $\Omega$ is a smooth, bounded, strictly convex and nonconvex domain in $\mathbb{R}^{n}(n\geq2)$ respectively, and $L$ is a mean curvature operator
\begin{equation}\label{1.2}\begin{array}{l}
Lu=div(\frac{\nabla u}{\sqrt{1+|\nabla u|^2}})=\sum\limits_{i,j=1}^{n}
a_{ij}(\nabla u)\frac{\partial^2 u}{\partial x_i \partial x_j},~~~n\geq 2,
\end{array}\end{equation}
where $a_{ij}=\frac{1}{\sqrt{1+|\nabla u|^2}}(\delta_{ij}-\frac{u_{x_i}u_{x_j}}{1+|\nabla u|^2})$.

Equation (\ref{1.2}) is a special case of the following $A$-Laplacian equation (see \cite{Gilbarg})
\begin{equation}\label{1.3}\begin{array}{l}
\left\{
\begin{array}{l}
\mbox{div}(A(|\nabla u|)\nabla u)=f(u)~~~\mbox{in}~\Omega,\\
u=0 ~~~\mbox{on}~ \partial\Omega,
\end{array}
\right.
\end{array}\end{equation}
which satisfies $A(h)+hA'(h)>0, h>0.$ For example, if $A(h)\equiv1,$  the  $A$-Laplacian equation is the well-known semilinear elliptic equation $\triangle u=f(u).$ On the other hand, if $A(h)=\frac{1}{\sqrt{1+h^2}},$ we obtain the well-known mean curvature equation. In this paper, we mainly consider the critical points of solutions for mean curvature equation.

Critical set of solutions for elliptic problems is a subject of important research. The investigations about critical points of solutions for elliptic equations have many results. However, the critical set $K$ has not been fully investigated, except for some few special domains, which still is an open problem for general domains, especially for higher dimension spaces. Now let us review some known results. In 1971 Makar-Limanov \cite{Makar-Limanov} solved the Poisson equation with constant inhomogeneous term in a convex domain, and proved that $u$ has one unique critical point. In 1985 Kawohl \cite{Kawohl1} extended Makar-Limanov's result under some hypothesis on the second derivative of $f.$ In 1998 Cabr\'{e} and Chanillo \cite{CabreChanillo} proved that the Poisson equation $-\triangle u=f(u)$ in smooth, bounded and convex domains of $\mathbb{R}^{n}(n\geq2)$ has exactly one nondegenerate critical point under the  assumption of semi-stable solutions. Under the similar assumptions about the domains,  Arango \cite{Arango} showed Poisson equation has exactly one nondegenerate critical point, provided that $f$ is a smooth and increasing function satisfying $f(0)>0.$ In 2008 Finn \cite{Finn} proved the uniqueness and nondegeneracy of critical points, under the same hypothesis of \cite{CabreChanillo}, and the weaker assumptions that $\Omega$ is a strictly convex $C^{2,\alpha}$ domain. Moreover, other authors have solved this problem and some other related problems in convex domains (see \cite{CaffarelliFriedman,CaffarelliSprock,ChenHuang,Deng2,Hamel,Sakaguchi}). For instance, in 2017 Deng, Liu and Tian \cite{Deng2} proved that the solution of constant mean curvature equation with Neumann or Robin boundary condition has exactly one nondegenerate critical point in $\mathbb{R}^n(n\geq 2)$. However, there exists a few results about the geometric distribution of critical set $K$ in nonconvex domains (see \cite{AlessandriniMagnanini,ArangoGomez2,Deng1}).

For nonconvex domains, the critical set $K$ of solutions for elliptic problems seems to be less considered. In 1992 Alessandrini and Magnanini \cite{AlessandriniMagnanini} studied the geometric structure of the critical set of solutions to semilinear elliptic equations in a planar nonconvex domain, whose boundary is composed of finite simple closed curves. They deduced that the critical set is made up of finitely many isolated critical points. In 2012 Arango and G\'{o}mez \cite{ArangoGomez2} considered the geometric distribution of critical points of the solutions to a quasilinear elliptic equation with Dirichlet boundary condition in strictly convex and nonconvex planar domains respectively. In 2017 Deng and Liu \cite{Deng1} investigate the geometric stucture of interior critical points of solutions $u$ to a quasilinear elliptic equation with nonhomogeneous Dirichlet boundary conditions in a simply connected or multiply connected domain $\Omega$ in $\mathbb{R}^2$. They develop a new method to prove $\Sigma_{i = 1}^k {{m_i}}+1=N$ or the different result $\Sigma_{i = 1}^k {{m_i}}=N,$ where $m_1,\cdots,m_k$ are the respective multiplicities of interior critical points $x_1,\cdots,x_k$ of $u$ and $N$ is the number of global maximum points of $u$ on $\partial\Omega$.

However, so far, the result about the critical set of solutions for quasilinear elliptic problems in higher dimension spaces is still an open problem. The goal of this paper is to obtain some results about the critical set of solutions for mean curvature equation in smooth, bounded, strictly convex and nonconvex domains of $\mathbb{R}^n(n\geq 2)$ respectively. Moreover, the domains $\Omega$ are some domains of revolution formed by taking a strictly convex and nonconvex planar domain about one axis respectively. Owing to the domains are symmetric with respect to some axis, therefore, we consider that the solutions of mean curvature equation should be symmetric about some axis, and the detail conclusion about symmetric solution of mean curvature equation can been seen in \cite{Bergner,LiNi,Serrin,Serrin2,SerrinZou}.

Throughout this paper, we shall suppose that $\Omega$ is a smooth, bounded, strictly convex and nonconvex domain respectively, and that $f$ is a real analytic, nondecreasing function. As we know that the coefficients $a_{ij}$ of mean curvature operator $L$ are analytic in $\mathbb{R}^n,$ and $L$ is uniformly elliptic. Within this assumptions and conditions, the existence of solutions cannot be taken for granted, but if there has a positive solution, then it is unique and analytic (see \cite{PucciSerrin,Serrin1}). The key idea is that we turn quasilinear elliptic equation associated to $u$ into a linear elliptic equation associated to $v=u_{\theta}.$ The results of this paper can be shown as the following three main theorems:

\vspace{0.3cm}\noindent {\bf Theorem 1.} \emph {Let $\Omega$ be a smooth, bounded and strictly convex domain of rotational symmetry with respect to an axis in $\mathbb{R}^n(n\geq3).$ Suppose that $f$ is a real analytic, nondecreasing function in $\mathbb{R}$ and that $u$ is a positive solution of equation (\ref{1.1}). Then $u$ has one unique nondegenerate critical point in $\Omega.$}\vspace{0.3cm}

\vspace{0.0cm}\noindent {\bf Theorem 2.} \emph {Let $\Omega$ be a symmetric concentric spherical annulus domain with external boundary $S^{n-1}_E$ and internal boundary $S^{n-1}_I$ in $\mathbb{R}^n(n\geq3),$ where the spherical surfaces $S^{n-1}_E$ and $S^{n-1}_I$ centered at the origin. Let $u$ be a solution of the constant mean curvature equation (\ref{1.1}) for the case $H=0$. Then the critical set $K$ of $u$ exists exactly one closed surface $S$, and $S$ is a spherical surface centered at the origin.}\vspace{0.3cm}

\vspace{0.0cm}\noindent {\bf Theorem 3.} \emph {Let $\Omega$ be a rotationally symmetric eccentric spherical annulus domain with respect to an axis in $\mathbb{R}^n(n\geq3),$ which has external boundary $S^{n-1}_E$ and internal boundary $S^{n-1}_I.$ Let $u$ be a solution of the constant mean curvature equation (\ref{1.1}) for the case $H=0$. Then the critical set $K$ of $u$ does not exist a rotationally symmetric closed surface $S$ with respect to an axis. In fact, $K$ is made up of finitely many isolated critical points ($p_1,p_2,\cdots,p_l$) on an axis and finitely many rotationally symmetric critical Jordan curves ($C_1,C_2,\cdots,C_k$) with respect to an axis.}\vspace{0.3cm}

The rest of this paper is written as follows. In Section 2, we describe the nodal set $N_{\theta}$ and the critical set $M_{\theta}$ of $u_{\theta},$ prove that $N_{\theta}$ cannot ¡°enclose¡± any subdomain of $\Omega$ and $M_{\theta}=\varnothing,$ i.e., the solution $u$ of equation (\ref{1.1}) is a Morse function. In Section 3, we give some descriptions about the geometric distribution of critical points in planar domain $\Omega,$ where $\Omega$ is a strictly convex domain and annulus domain respectively. In Section 4, our difficulty is to prove the rationality of projection. Firstly, we give some known results about symmetric solution of mean curvature equation in strictly convex domain $\Omega$ of $\mathbb{R}^n(n\geq3)$ and the detailed proof of Theorem \ref{th4.2}. Furthermore, we study the geometric distribution about the critical set $K$ of solutions $u$ for the constant mean curvature equation (\ref{1.1}) for the case $H=0$ in a concentric (respectively an eccentric) spherical annulus domain $\Omega$ of $\mathbb{R}^{n}(n\geq3)$, and deduce that $K$ exists (respectively does not exist) a rotationally symmetric critical closed surface $S$ respectively. In fact, in an eccentric spherical annulus domain $\Omega$, $K$ is made up of finitely many isolated critical points ($p_1,p_2,\cdots,p_l$) on an axis and finitely many rotationally symmetric critical Jordan curves ($C_1,C_2,\cdots,C_k$) with respect to an axis.

\section{The nodal set $N_{\theta}$ and critical set $M_{\theta}$ of $u_{\theta}$}
~~~~In order to conveniently describe the critical set $K=\{x\in\Omega|\nabla u(x)=0\},$ we need introduce some notations and auxiliary terms. Now, for any direction $\theta=(\theta_1,\theta_2,\cdots,\theta_n)\in S^{n-1}$ and any function $u,$ we define the nodal set of directional derivative $N_{\theta}=\{x\in\Omega|u_{\theta}=\nabla u(x)\cdot\theta=0\}.$ We know that $K=N_{\theta} \cap N_{\alpha},$ if $\theta$ and $\alpha$ are two noncollinear directions of $S^{n-1},$ and we consider the following critical set of directional derivative $u_{\theta}$
\begin{equation}\label{2.1}\begin{array}{l}
M_{\theta}=\{x\in N_{\theta}|\nabla u_{\theta}(x)=D^2 u(x)\cdot\theta=0\}\subset N_{\theta},
\end{array}\end{equation}
where $D^2 u(x)$ denotes the Hessian matrix of $u$ at $x.$

Near the regular points of $u_{\theta},$ $N_{\theta}$ can be locally parametrized as a solution of the following Hamiltonian system associated to $u_{\theta}$
\begin{equation}\label{2.2}\begin{array}{l}
\dot{x}(t)=B\nabla u_{\theta}(x),
\end{array}\end{equation}
where $B$ is the Poisson matrix.

In particular when $n=2,$ then
\begin{eqnarray*}
B=\left (
\begin{array}{cc}
  0 & 1 \\
  -1 & 0
\end{array}
\right),
\end{eqnarray*}
we assume $x(t)=x(t;\theta,p)$ be the solution of equation (\ref{2.2}) satisfying $x(0)=p.$ Since $f$ is a real analytic function, we know that $u$ is also analytic in $\Omega,$ so the solution of equation (\ref{2.2}) is analytic too.

Next, we will give a key idea for studying the critical set of solution $u$ for mean curvature equation (\ref{1.1}). Assume $u$ is a positive solution of equation (\ref{1.1}), for any $\theta\in S^{n-1},$ we turn quasilinear elliptic equation associated to $u$ into a linear elliptic equation associated to $v=u_{\theta}.$ Firstly, we differentiate the equation (\ref{1.1}), then take inner product with $\theta,$ hence we can get the following  equation
\begin{equation}\label{2.3}\begin{array}{l}
L_{u}v+h_1(x)\frac{\partial v}{\partial x_1}+h_2(x)\frac{\partial v}{\partial x_2}+\cdots +h_n(x)\frac{\partial v}{\partial x_n}=f'(u)v,
\end{array}\end{equation}
where
\[L_{u}v=\sum\limits_{i,j=1}^{n}
a_{ij}(\nabla u)\frac{\partial^2 v}{\partial x_i \partial x_j},~~~n\geq 2\]
and
\[h_{k}(x)=\sum\limits_{i, j=1}^{n} u_{x_i x_j}\frac{\partial a_{ij}}{\partial u_{x_k}},~~~1\leq k\leq n.\]

The following we will give the crucial lemma of the proof of Theorem \ref{th3.1} for dimension $n=2$.
\begin{Lemma}\label{le2.1}
Let $\Omega$ be a strictly convex planar domain. Suppose that $f$ is a real analytic, nondecreasing function in $\mathbb{R}$ and that u is a positive solution of equation (\ref{1.1}). Then for any $\theta\in S^1,$ $N_\theta$ cannot enclose any subdomain of $\Omega,$ i.e.,  $N_\theta$ without self-intersection.
\end{Lemma}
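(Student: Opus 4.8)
The plan is to exploit the linear elliptic equation (\ref{2.3}) satisfied by $v=u_\theta$ and to combine the maximum principle with the real-analyticity of $u$. First I would rewrite (\ref{2.3}) as $\mathcal{L}v:=L_u v+\sum_{k=1}^{n}h_k(x)\partial_{x_k}v-f'(u)v=0$ in $\Omega$, and record the two structural facts that drive everything: the coefficients $a_{ij}(\nabla u)$ are analytic and $L_u$ is uniformly elliptic, so $\mathcal{L}$ is a uniformly elliptic operator with analytic coefficients and $v$ is itself analytic; and the zeroth-order coefficient $c(x)=-f'(u)$ is nonpositive because $f$ is nondecreasing. These are precisely the hypotheses required for the weak maximum principle for operators with $c\le 0$ and for unique continuation by analyticity.

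The core step rules out an interior nodal loop. Suppose, for contradiction, that a closed curve $\Gamma\subset N_\theta$ bounds a subdomain $D\subset\subset\Omega$. Then $v=0$ on $\partial D=\Gamma$, and applying the weak maximum principle for $c\le0$ to $v$ gives $\sup_D v\le\sup_{\partial D}v^{+}=0$, while applying it to $-v$ gives $\sup_D(-v)\le\sup_{\partial D}(-v)^{+}=0$; hence $v\equiv0$ in $D$. Since $v$ is real analytic and $\Omega$ is connected, $v\equiv0$ in all of $\Omega$, i.e. $u_\theta\equiv0$. This contradicts the fact that $u$ vanishes on $\partial\Omega$ and is positive inside, since along any chord of $\Omega$ in the direction $\theta$ the function $u$ must be nonconstant. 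Therefore $N_\theta$ contains no interior closed loop.

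It remains to upgrade ``no interior loop'' to ``no self-intersection'' and ``encloses no subdomain'', and here I would use the boundary structure. On $\partial\Omega$ the tangential part of $\nabla u$ vanishes, so $v=(\partial_\nu u)(\nu\cdot\theta)$; by the Hopf lemma $\partial_\nu u<0$, and strict convexity (bijectivity of the Gauss map) forces $\nu\cdot\theta=0$ at exactly two boundary points $P_1,P_2$, with $v$ of one sign on one of the two boundary arcs and of the opposite sign on the other. Thus $\overline{N_\theta}\cap\partial\Omega=\{P_1,P_2\}$. If $N_\theta$ had an interior self-intersection at a point $p$ (where $v(p)=0$ and $\nabla v(p)=0$), then, by the local structure of the zero set of an analytic function at a critical zero, at least four nodal arcs would emanate from $p$. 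Because the nodal set of the analytic function $v$ is a locally finite union of analytic arcs with isolated singular points, each such arc, followed away from $p$, either terminates on $\partial\Omega$ at a zero of $v$ or closes up into a loop. Since only the two boundary zeros $P_1,P_2$ are available, at most two arcs can escape to $\partial\Omega$, so at least two must form closed loops bounding interior subdomains, contradicting the core step. Hence $N_\theta$ has no self-intersection and encloses no subdomain.

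The main obstacle I expect is this last, global-topological step: controlling the global behaviour of the analytic nodal set (finiteness of singular points, proper approach to $\partial\Omega$ only at $P_1,P_2$, and the arc-counting that converts ``too many arcs at $p$'' into ``an enclosed interior loop''). By contrast, the maximum-principle and analytic-continuation heart of the argument is routine once the sign $c=-f'(u)\le0$ is secured, so the monotonicity hypothesis on $f$ is exactly what makes the whole scheme work.
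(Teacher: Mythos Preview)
Your core argument---the maximum principle applied to $v=u_\theta$ on a region bounded by a nodal loop, using $c=-f'(u)\le 0$, followed by analytic continuation to force $u_\theta\equiv 0$ and a contradiction with $u>0$ in $\Omega$, $u=0$ on $\partial\Omega$---is exactly the paper's proof. The paper's version is terser: it simply notes that if a Jordan curve $C\subset N_\theta$ bounds a region $\Omega_C$, then either $\partial\Omega_C=C$ (leading to $u_\theta\equiv 0$ by the maximum principle and analyticity, hence $u\equiv 0$) or $\Omega_C$ is not simply connected (contradicting convexity of $\Omega$).

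Where you go further than the paper is in the passage from ``no enclosed subdomain'' to ``no self-intersection.'' The paper treats these as synonymous in the lemma statement and does not argue the point; instead it imports Cheng's nodal-structure theorem as a separate Lemma~2.2 and combines the two in Remark~2.3 to obtain $M_\theta=\varnothing$. Your boundary analysis (Hopf lemma plus strict convexity to pin down exactly two boundary zeros $P_1,P_2$, then arc-counting at a putative singular nodal point) is a correct and more self-contained route to the same end, essentially folding the content of Lemma~2.2/Remark~2.3 into the lemma itself. Both approaches work; yours is more explicit about the global topology, while the paper's defers that to the cited result of Cheng.
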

\begin{proof}[Proof] By contradiction. Assume $N_\theta$ enclose some subdomain of $\Omega$ for some $\theta,$ let $C\in N_{\theta}$ be a Jordan curve and $\Omega_{C}$ be the intersection of the interior of $C$ with $\Omega.$ Since $f$ is an nondecreasing function and $v$ satisfies the zero boundary condition. By the maximum principle, the only solution of equation (\ref{2.3}) is $v\equiv0$ in $\Omega_{C}.$  On the other hand, if $C$ is the only boundary of $\Omega_{C},$  and $u_{\theta}$ is analytic, then we have $u_{\theta}=0$ in $\Omega,$ it means that $u$ is a constant in the $\theta$ direction. Since $u=0$ on $\partial\Omega,$ we get $u=0$ in $\overline{\Omega},$ which contradicts with the fact $u>0$ in $\Omega.$ Therefore $C$ cannot be the only boundary of $\Omega_{C},$ and $\Omega_{C}$ is not simply connected. It is contradictory with the convexity of domain $\Omega,$ hence $N_{\theta}$ cannot enclose any subdomain of $\Omega.$
\end{proof}
\begin{Lemma}\label{le2.2}
(see\cite[Theorem 2.5]{Cheng}) If $v$ is a nonzero solution of equation (\ref{2.3}) for $n=2$. Then the critical points of $v$ on its nodal set are isolated and  the nodal set $N_{\theta}$ is a regular analytic curve at regular points. Moreover, at any critical point, the nodal set is locally an equiangular system of at least four rays splitting $\Omega$ into a finite number of connected subregions.
\end{Lemma}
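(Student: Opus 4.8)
The plan is to study the local structure of the nodal set $N_\theta=\{v=0\}$ near an arbitrary zero of $v$, separating regular from singular points, and then to globalize. Since $f$ is real analytic, the positive solution $u$ is analytic, so the coefficients $a_{ij}(\nabla u)$, $h_k(x)$ and $f'(u)$ of the uniformly elliptic linear equation (\ref{2.3}) are analytic; in particular $v=u_\theta$ is analytic in $\Omega$. At a point $p$ with $v(p)=0$ but $\nabla v(p)\neq0$ the implicit function theorem shows at once that $N_\theta$ is a regular analytic curve near $p$. The content of the lemma therefore lies entirely at the critical points, where $v(p)=0$ and $\nabla v(p)=0$ simultaneously, and the goal there is to extract the homogeneous leading behavior of $v$.

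For the local analysis at such a critical point $p$, the first step is to normalize the principal part. In dimension two one may introduce isothermal coordinates in a neighborhood of $p$, after which the principal symbol $\sum a_{ij}(\nabla u(p))\xi_i\xi_j$ is turned into the Laplacian, so that $v$ (written as $w$ in the new coordinates) solves an equation of the form $\Delta w+b_1 w_{y_1}+b_2 w_{y_2}+cw=0$ with bounded coefficients. The decisive tool is then the theory of generalized (pseudo)analytic functions of Bers and Vekua: forming the complex gradient $W=w_{y_1}-i\,w_{y_2}$ and absorbing the zeroth-order term, $W$ satisfies a first-order Carleman--Bers--Vekua system, and by the similarity principle it admits a representation $W(z)=e^{s(z)}\Phi(z)$ with $s$ continuous and $\Phi$ holomorphic. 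Because $v\not\equiv0$, unique continuation forces $\Phi$ to have a zero of some finite order at $p$, and integrating back recovers the Bers asymptotic expansion
\begin{equation}\label{e-bers}
w(y)=P_N(y)+o(|y|^N),\qquad N\geq2,
\end{equation}
where $P_N$ is a nonzero homogeneous harmonic polynomial of degree $N$ in the new coordinates.

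With expansion (\ref{e-bers}) in hand the geometric conclusions follow. A homogeneous harmonic polynomial of degree $N$ in two variables is, in complex notation $z=y_1+iy_2$, a constant multiple of $\mathrm{Re}(e^{i\beta}z^N)=r^N\cos(N\varphi+\beta)$, whose zero set is precisely $2N$ equally spaced rays through $p$; since $N\geq2$ at a critical point this is an equiangular system of at least four rays. As the leading term dominates, $N_\theta$ near $p$ is a topological perturbation of these $2N$ rays, which confirms the ray picture and, through $\nabla w=\nabla P_N+o(|y|^{N-1})$ together with the fact that $\nabla P_N$ vanishes only at the origin, shows that $\nabla v\neq0$ on a small punctured neighborhood of $p$; hence the critical points are isolated. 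Finally, isolation combined with the compactness of $\overline{\Omega}$ gives finitely many critical points, so $N_\theta$ is a locally finite union of analytic arcs splitting $\Omega$ into finitely many connected subregions.

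The step I expect to be the main obstacle is the rigorous derivation of the Bers expansion (\ref{e-bers}): that a solution of a general second-order elliptic equation vanishes to finite order and that its leading homogeneous part is harmonic with respect to the frozen principal symbol. This is a genuinely two-dimensional phenomenon and rests both on the reduction to isothermal coordinates and on the similarity principle for Carleman--Bers--Vekua systems; transferring the conclusion back from the complex gradient $W$ to the nodal set of $v$ itself, and controlling the remainder so that the perturbed nodal set retains the equiangular crossing structure, is where the delicate estimates are needed.
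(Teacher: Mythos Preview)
The paper does not prove Lemma~\ref{le2.2} at all: it is stated as a citation of Cheng's theorem \cite[Theorem~2.5]{Cheng} and used as a black box. So there is no ``paper's own proof'' to compare against, and your proposal is in effect a sketch of how Cheng's result (or its variable-coefficient extension due to Bers and Hartman--Wintner) is established.

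That said, your outline is the standard and correct route: analyticity of $v$, the implicit function theorem at regular zeros, reduction of the principal part to the Laplacian by a local linear (or isothermal) change of variables, the Bers/Hartman--Wintner asymptotic expansion giving a nonzero homogeneous harmonic leading term $P_N$ with $N\geq 2$ at a point where $v$ and $\nabla v$ both vanish, and the explicit structure $r^N\cos(N\varphi+\beta)$ of such polynomials in the plane. This yields isolation of the nodal critical points and the $2N$-ray picture. One small caveat worth making explicit: the equiangularity is exact only in the coordinates in which the frozen principal symbol at $p$ is the identity; in the original $x$-coordinates the $2N$ arcs meet at $p$ with angles determined by the linear change of variables, and ``equiangular'' should be read accordingly (Cheng's original statement is for $\Delta$, where no such distinction arises). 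Your identification of the Bers expansion as the crux is apt; for a clean reference in the variable-coefficient setting one usually cites Bers' local behavior theorem or Hartman--Wintner rather than redoing the similarity-principle computation.
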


\begin{Remark}\label{re2.3}
 According to Lemma \ref{le2.1} and Lemma \ref{le2.2}, we easily know that $M_{\theta}=\varnothing$ and $N_{\theta}$ is homeomorphic to the interval $[0,1]$, since $M_{\theta}=\varnothing,$ we get $\nabla u_{\theta}(x)=D^2u(x)\cdot \theta\neq 0$ for any $\theta,$ so $rank(D^2u(x))=2.$ Furthermore, owing to $K\subset N_{\theta}$ for any $\theta$, we deduce that the solution $u$ of equation (\ref{1.1}) is a Morse function. The descriptions about the Morse and semi-Morse function have been already studied(see \cite{Bott}).
\end{Remark}

\section{The critical set for planar domain }
~~~~In this section, we investigate the geometric distribution of critical points in planar domain $\Omega,$ where $\Omega$ is a smooth, bounded, strictly convex domain and nonconvex domain respectively.
\begin{Theorem}\label{th3.1}
 Let $\Omega$ be a smooth, bounded and strictly convex domain in $\mathbb{R}^2,$ whose boundary has positive curvature. Suppose that $f$ is a real analytic, nondecreasing function in $\mathbb{R}$ and that $u$ is a positive solution of equation (\ref{1.1}). Then $u$ has one unique nondegenerate critical point in $\Omega.$
\end{Theorem}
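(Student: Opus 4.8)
The plan is to combine three ingredients: an existence/nondegeneracy input drawn from the nodal analysis already carried out, a topological degree count coming from the strict convexity of $\partial\Omega$, and a maximum-principle argument on the linearized equation (\ref{2.3}) to exclude every critical point but one maximum. First I would record existence and nondegeneracy: since $u>0$ in $\Omega$ and $u=0$ on $\partial\Omega$, the function $u$ attains an interior maximum, which is a critical point, so $K\neq\varnothing$; by Remark \ref{re2.3}, $u$ is a Morse function, hence every point of $K$ is nondegenerate and isolated, $K$ is finite, and at each $x\in K$ the gradient index equals $\operatorname{sgn}\det D^2u(x)=\pm1$. Next I would rule out interior local minima: at such a point $\nabla u=0$ forces $a_{ij}(\nabla u)=\delta_{ij}$, so $Lu=\Delta u=\operatorname{tr}D^2u>0$ (the Hessian is positive definite), giving $f(u)>0$ there; but at the global maximum the same computation yields $f(u)=\Delta u<0$, and since the maximum value dominates the minimum value while $f$ is nondecreasing, this is a contradiction. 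Thus $K$ consists only of maxima (index $+1$) and saddles (index $-1$).

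For the degree count, I would use Hopf's lemma together with the Dirichlet condition: on $\partial\Omega$ the tangential derivative vanishes and $u$ increases inward, so $\nabla u=-|\nabla u|\,\nu$ with $\nu$ the outer unit normal, in particular $\nabla u\neq0$ on $\partial\Omega$ and points along the inward normal. Because $\partial\Omega$ is strictly convex with positive curvature, the inward normal makes exactly one full turn as one traverses $\partial\Omega$, so the map $\nabla u/|\nabla u|:\partial\Omega\to S^1$ has winding number $1$. By the Poincaré–Hopf theorem (the argument principle for the gradient field), $\sum_{x\in K}\operatorname{sgn}\det D^2u(x)=1$, that is, $\#\{\text{maxima}\}-\#\{\text{saddles}\}=1$.

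It then remains to show there are no saddles, and this is where the main obstacle lies. Fixing two noncollinear directions $\theta,\theta^{\perp}\in S^1$ gives $K=N_\theta\cap N_{\theta^{\perp}}$. By Lemma \ref{le2.1} and Remark \ref{re2.3}, each of $N_\theta,N_{\theta^{\perp}}$ is a single simple analytic arc with no self-intersection and no branch point; moreover, since on the strictly convex boundary $u_\theta=-|\nabla u|\,(\nu\cdot\theta)$ vanishes only at the two points where the tangent is parallel to $\theta$, each arc has exactly two endpoints on $\partial\Omega$, and the four endpoints alternate around $\partial\Omega$. At each $x\in K$ the two arcs cross transversally (the Hessian is invertible by Remark \ref{re2.3}), and a short computation shows the sign of the crossing equals $\operatorname{sgn}\det D^2u(x)$; hence the \emph{algebraic} intersection number of $N_\theta$ and $N_{\theta^{\perp}}$ equals the degree computed above, namely $1$.

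The hard part will be promoting this algebraic count to a geometric one, i.e.\ excluding a cancelling pair of crossings of opposite sign, which is precisely the statement that there are no saddles. Here I would exploit the PDE: the linearized operator in (\ref{2.3}) has nonpositive zeroth-order coefficient $-f'(u)\le0$, so by the maximum principle $u_{\theta^{\perp}}$ has exactly two nodal domains, each simply connected (Lemma \ref{le2.1}). A cancelling pair of crossings would force the arc $N_\theta$ to leave and re-enter one of these nodal domains of $u_{\theta^{\perp}}$, thereby either enclosing a subdomain of $\Omega$ or acquiring a branch point inside $\Omega$ — both excluded by Lemma \ref{le2.1} and Remark \ref{re2.3}. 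Consequently $N_\theta$ and $N_{\theta^{\perp}}$ meet exactly once, so $\#\{\text{saddles}\}=0$, and the degree identity forces $\#\{\text{maxima}\}=1$. Therefore $u$ has a unique, nondegenerate critical point, namely its interior maximum, which is the assertion of the theorem.
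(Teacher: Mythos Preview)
Your approach is genuinely different from the paper's and is well-organized up to the last step, but the crucial promotion from algebraic to geometric intersection number has a real gap. The assertion that ``a cancelling pair of crossings would force $N_\theta$ to leave and re-enter one of the nodal domains of $u_{\theta^{\perp}}$, thereby either enclosing a subdomain of $\Omega$ or acquiring a branch point'' does not follow. Two disjoint-endpoint simple arcs in a topological disk with alternating endpoints can intersect transversally in any odd number $1,3,5,\dots$ of points without either arc self-intersecting or branching; the arc $N_\theta$ simply crosses back and forth across $N_{\theta^{\perp}}$. Nothing in Lemma~\ref{le2.1} or Remark~\ref{re2.3} prevents this: those results control each nodal line individually, not the union $N_\theta\cup N_{\theta^{\perp}}$. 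Thus your argument establishes $\#\{\text{maxima}\}-\#\{\text{saddles}\}=1$ but does not rule out the configuration with two maxima and one saddle (which is exactly the mountain-pass obstruction one must exclude).

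For comparison, the paper avoids this difficulty entirely by a flow argument: it writes $\nabla u=|\nabla u|(\cos\lambda,\sin\lambda)$, builds from $\nabla\lambda$ a Lipschitz vector field $Z$ on $\overline{\Omega}$ vanishing on $K$ and tangent to $\partial\Omega$, and checks that its flow $\varphi_t$ satisfies $\varphi_t(N_\theta)=N_{\theta+t}$. Taking $t=\pi$ gives a self-homeomorphism of the arc $N_0\cong[0,1]$ that swaps its two endpoints, hence has exactly one fixed point; since every critical point is fixed by the flow, $|K|=1$. This uses the \emph{full} one-parameter family $\{N_\theta\}_{\theta\in S^1}$ rather than just two members, and it is precisely this continuity in $\theta$ that substitutes for the missing step in your argument. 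If you want to repair your approach, you would need an additional PDE input that forbids two transversal crossings of opposite sign along $N_\theta$; the information you invoke (sign of the zeroth-order coefficient, two nodal domains of $u_{\theta^{\perp}}$) is not sufficient by itself.
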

\begin{proof}[Proof] We divide the proof into two steps.

Step 1, By Lemma \ref{le2.1} and Remark \ref{re2.3}, since $M_\theta =\varnothing$  for any $\theta  \in S^1,$ we deduce that the solution $u$ of  equation (\ref{1.1}) is a Morse function, so all critical points of  $u$ are nondegenerate.

Step 2, we will prove the uniqueness of critical points. Indeed, if $x\in \overline{\Omega}\backslash K,$ we note
\begin{equation}\label{3.1}\begin{array}{l}
\nabla u(x) = |\nabla u(x)|(\cos \lambda (x),\sin \lambda (x)),
\end{array}\end{equation}
which $\lambda(x)\in R$ and $\cos\lambda(x) = \frac{u_{x_1}}{|\nabla u|},\sin\lambda (x) = \frac{u_{x_2}}{|\nabla u|}.$ This expression defines a smooth function $\lambda$ locally in $x,$ for any
$x \in \overline \Omega  \backslash K,$ we have $x\in N_\theta$ if and only if $\theta\perp \nabla u(x),$  since $\nabla u\cdot (\cos(\lambda(x)\pm \pi /2), \sin(\lambda(x)\pm \pi /2))=0,$ hence we deduce
\[x \in {N_{\lambda (x) \pm \pi /2}}.\]
Next we need compute, locally, an expression for $\nabla\lambda.$ By formula (\ref{3.1}), since $\sin \lambda (x) = \frac{{{u_{{x_2}}}}}{{|\nabla u|}},$ then we have
\[\left\{ \begin{array}{l}
 \lambda_{x_1}\cos \lambda(x) = \frac{u_{x_1x_2}u_{x_1}^2 - u_{x_1x_1}u_{x_1}u_{x_2}}{|\nabla u|^3}, \\
 \lambda_{x_2}\cos \lambda(x) = \frac{u_{x_2x_2}u_{x_1}^2 - u_{x_1x_2}u_{x_1}u_{x_2}}{|\nabla u|^3}, \\
 \end{array} \right.\]
in case that $\cos \lambda(x)\neq 0,$ we deduce
\[\begin{array}{l}
 \nabla \lambda  = \frac{1}{|\nabla u|^2}(u_{x_1x_2}u_{x_1} - u_{x_1x_1}u_{x_2},u_{x_2x_2}u_{x_1} - u_{x_1x_2}u_{x_2}) \\
~~~~  = \frac{1}{|\nabla u|^2}{D^2}u \cdot ( - u_{x_2},u_{x_1}).
 \end{array}\]
Even though $\lambda$ be defined only  locally, the above expression allows us to define the vector field
\begin{equation}\label{3.2}\begin{array}{l}
X = \frac{1}{|\nabla u|^2}{D^2}u \cdot ( - u_{x_2},u_{x_1})~~\mbox{in}~~\overline \Omega  \backslash K,
\end{array}\end{equation}
which accords with $X(x) = \nabla \lambda (x).$

On one hand, we can know that $X(x)\neq 0$ for any $x \in \overline \Omega \backslash K.$ Indeed, since
$rank(D^2u(x))=2,$ we get $X = \frac{1}{|\nabla u|^2}D^2u \cdot ( - u_{x_2},u_{x_1}) \ne 0.$

 On the other hand, since $rank(D^2u(x))=2,$ at all points in some neighborhood of $K,$ for some constant $C>0$, we obtain
 \[|X| = \frac{1}{|\nabla u|}|{D^2}u \cdot (-\sin \lambda (x),\cos \lambda (x)) \ge \frac{C}{|\nabla u|}\]
in a neighborhood of $K.$ Since $X(x)\ne0$ in  $x \in \overline \Omega  \backslash K,$ then we deduce
\begin{equation}\label{3.3}\begin{array}{l}
|X| \ge \frac{C}{{|\nabla u|}}~~\mbox{in}~~x\in \overline{\Omega}\backslash K.
\end{array}\end{equation}

By Hopf  boundary lemma, we get $\left\langle {X,t} \right\rangle  = |\nabla u{|^{ - 1}}{u_{tt}} = \kappa,$ where $\langle \cdot,\cdot \rangle$ denotes interior product, $\kappa$ is the curvature of $\partial\Omega$ and $t$ is the positive oriented tangent vector to $\partial\Omega$. Hence we have
\[\left\langle {X,t} \right\rangle  > 0~~\mbox{on}~~\partial\Omega.\]
Next we revise $X$ in a neighborhood of $\Omega$ to agree with $t$ on $\partial\Omega.$ Further there exists a smooth vector field $Y$ on $\overline{\Omega}\backslash K$ such that
\begin{equation}\label{3.4}\begin{array}{l}
Y = \left\{ \begin{array}{l}
 X~~\mbox{in}~~\Omega \backslash K, \\
 t~~~\mbox{on} ~~\partial \Omega , \\
 \end{array} \right.
\end{array}\end{equation}
so $\langle Y,X\rangle >0$ in $\overline{\Omega}\backslash K.$

Then we define another vector field $Z$ associated to $Y,$ which is smooth in $x\in \overline{\Omega}\backslash K,$ and tangent to $\partial\Omega.$
\begin{equation}\label{3.5}\begin{array}{l}
Z= \left\{ \begin{array}{l}
 \frac{Y}{\langle Y,X\rangle}~~\mbox{in}~~\overline{\Omega}\backslash K,\\
 0~~~~~~~\mbox{in} ~~K , \\
 \end{array} \right.
\end{array}\end{equation}
 we claim that field vector $Z$ can be extended to be Lipschitz in $\overline{\Omega},$ that is
\begin{equation}\label{3.6}\begin{array}{l}
|Z({x_2}) - Z({x_1})| \le C|{x_2} - {x_1}| ~~\forall~~{x_1},{x_2} \in \overline{\Omega}
\end{array}\end{equation}
for some constant $C>0.$

Indeed, since $Z=\frac{X}{|X|^2}$ in $\overline{\Omega}\backslash K,$ by (\ref{3.3}) we have
\begin{equation}\label{3.7}\begin{array}{l}
|Z| \le C|\nabla u| ~~\mbox{in}~~ \overline{\Omega}
\end{array}\end{equation}
for another positive constant $C>0,$ hence $Z$ is continuous in $\overline{\Omega}.$

Firstly, if either $x_1$ or $x_2$ belong to $K,$ without loss of generality, let $x_1\in K,$ then
\begin{equation}\label{3.8}\begin{array}{l}
|Z(x_2)-Z(x_1)|\leq |Z(x_2)|+|Z(x_1)|=|Z(x_2)|\\
~~~~~~~~~~~~~~~~~~~~\leq C|\nabla u(x_2)|=C|\nabla u(x_2)-\nabla u(x_1)|\\
~~~~~~~~~~~~~~~~~~~~\leq C|x_2-x_1|.
\end{array}\end{equation}
Secondly, if a segment $l$ joining $x_1$ and $x_2$ intersects with $K,$ note that $l\subset \overline{\Omega},$ since we select $\overline{\Omega}$ to be convex. According to (\ref{3.8}),  for any $p\in K,$ we have
\begin{equation*}\label{}\begin{array}{l}
|Z(x_2)-Z(x_1)|\leq |Z(x_2)-Z(p)|+|Z(x_1)-Z(p)|\\
~~~~~~~~~~~~~~~~~~~~= |Z(x_2)|+|Z(x_1)|\\
~~~~~~~~~~~~~~~~~~~~\leq C|x_2-x_1|.
\end{array}\end{equation*}
Finally, suppose that this segment $l$ is included in $\overline{\Omega}\backslash K.$ Hence we can differentiate $Z$ along $l,$ where $D$ denotes the full differential, by (\ref{3.3}), we have
\[ |DZ|=|D(\frac{X}{|X|^2})|\leq C\frac{|DX|}{|X|^2}\leq C|\nabla u|^2|DX|~~\mbox{in}~~\overline{\Omega}\backslash K.\]
Note that differentiating $X=\frac{1}{|\nabla u|^2}D^2u\cdot (-u_{x_2},u_{x_1})$ in $\overline{\Omega}\backslash K,$ we get $|DX|\leq C|\nabla u|^{-2}.$ So we deduce that $|DZ|\leq C$ along $l,$ and complete the proof of (\ref{3.6}).

Since each Lipschitz vector field locally can generate a one-parameter transformation group(i.e., flow). Now, we assume that there is an open convex neighborhood $E$ of $K,$ provides $K\subset E \subset \overline{E} \subset \Omega.$ Next, we will consider the one-parameter transformation group $\varphi_t$ at time $t$ associated to the Lipschitz vector field $Z,$ which satisfies
\begin{equation}
\begin{cases}
\varphi_0(p)=p~~~~~~~~~ \forall~ p\in E,\\
\varphi_s\circ \varphi_t=\varphi_{s+t}~~~\forall~ s,t\in \mathbb{R},
\end{cases}
\end{equation}
and
\begin{equation}\label{3.10}\begin{array}{l}
Z(p)=\frac{d\varphi_t(p)}{dt}\mid_{t=0}~~~\forall~~ p\in E.
\end{array}\end{equation}
Since $Z$ is parallel to $\partial\Omega,$ hence we know that $\varphi_t$ is a continuous flow that keep $\overline{\Omega}$ invariant. Moreover, the flow $\varphi_t$ keep any critical point of $u$ fixed. Then, we can easily to deduce that
\[\varphi_t(N_\theta)\subset N_{\theta + t}\]
for any nodal set $N_\theta$(i.e., path associated with the flow $\varphi_t$) and any time $t$. Indeed, by reversing time, we know that $\varphi_t$ is an homeomorphism from $N_\theta$ onto $N_{\theta+t},$ i.e., $\varphi_t(N_\theta)=N_{\theta+t}.$ Then we choose the time $t=\pi,$ we get
\begin{equation}\label{3.11}\begin{array}{l}
\varphi_{\pi}(N_0)=N_{0+\pi}=N_0,
\end{array}\end{equation}
where $\varphi_{\pi}$ is a homeomorphism of $N_0$ that interchanges the two end-points of $N_0.$ According to Remark \ref{re2.3}, since $N_0$ is homeomorphic to the interval $[0,1],$ so $\varphi_{\pi}$ keep that $N_0$ has one unique fixed point. Moreover, because the flow $\varphi_{\pi}$ keep any critical point of $u$ fixed, thus the solution $u$ of equation (\ref{1.1}) has exactly one critical point.
\end{proof}
\begin{Remark}\label{re3.2} 
The convexity of all the nodal set of any solution $u$ in Theorem \ref{th3.1} is an open problem, even for semilinear case in dimension 2. Note that the strict convexity of  nodal set is a stronger property than the uniqueness of critical points of the solution $u$. The related research results can be found in \cite{Kawohl2}.
\end{Remark}

Next we study the geometric structure about critical set $K$ of solutions $u$ for the constant mean curvature equation in planar nonconvex domain $\Omega,$ where $\Omega$ is a concentric and an eccentric circle annulus domain respectively.
\begin{Lemma}\label{le3.3}
 Let $\Omega$ be a symmetric planar concentric circle annulus domain with internal boundary $\gamma_I$ and external boundary $\gamma_E$, where the circles $\gamma_I$ and $\gamma_E$ centered at the origin. Let $u$ be a solution of the constant mean curvature equation (\ref{1.1}). Then $u$ has exactly one critical Jordan curve $C$ in $\Omega,$ and $C$ is a circle centered at the origin.
\end{Lemma}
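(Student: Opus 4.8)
The plan is to exploit the full rotational symmetry of the problem to reduce it to a one-dimensional ODE, and then to extract the single critical circle from an elementary but careful monotonicity analysis.

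First I would establish that $u$ is radially symmetric. Since $\gamma_I$ and $\gamma_E$ are circles centered at the origin, the annulus $\Omega$ is invariant under every rotation $R\in O(2)$, and the mean curvature operator $L$ in (\ref{1.2}) is rotation-invariant. Hence if $u$ solves (\ref{1.1}) then so does $u\circ R$, with the same zero Dirichlet data. Invoking the uniqueness of the positive solution recalled in the Introduction, $u\circ R=u$ for all $R$, so $u=u(r)$ with $r=|x|$. In particular $\nabla u(x)=u'(r)\,x/r$, so $\nabla u(x)=0$ precisely when $u'(r)=0$; since the origin does not lie in $\Omega$, the critical set $K$ is exactly the union of the concentric circles $\{|x|=r_0\}$ for which $u'(r_0)=0$.

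Next I would reduce (\ref{1.1}) to its radial form in dimension two,
\[
\Big(\frac{r\,u'(r)}{\sqrt{1+u'(r)^2}}\Big)'=r\,f(u(r)),\qquad a<r<b,
\]
where $a,b$ are the radii of $\gamma_I,\gamma_E$. Writing $\phi(r)=r\,u'(r)/\sqrt{1+u'(r)^2}$, we have $\phi'(r)=r\,f(u(r))$, and $\phi$ vanishes exactly at the critical radii. Because $u=0$ on both boundary circles and $u>0$ inside, the profile attains its maximum at some interior $r^*$, which yields at least one critical radius; existence is thus immediate.

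The heart of the argument, and the step I expect to be the main obstacle, is the uniqueness of the critical radius. Since $f$ is real analytic and $u$ is the analytic solution, $u'$ is analytic and not identically zero, so its zeros in $[a,b]$ are isolated and finite in number. Suppose $r_1<r_2$ are two consecutive critical radii; then $u'$ keeps a constant sign on $(r_1,r_2)$, so $\phi$ keeps a constant sign there while $\phi(r_1)=\phi(r_2)=0$. If $u'>0$ on $(r_1,r_2)$ then $u(r_1)<u(r_2)$, and $\phi>0$ forces $\phi'(r_1)\ge0\ge\phi'(r_2)$, i.e. $f(u(r_1))\ge0\ge f(u(r_2))$; but $f$ nondecreasing gives $f(u(r_1))\le f(u(r_2))$, so $f\equiv0$ on $[u(r_1),u(r_2)]$, forcing $\phi'\equiv0$ and hence $\phi\equiv0$ on $(r_1,r_2)$, contradicting $\phi>0$. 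The case $u'<0$ is symmetric. Thus two consecutive critical radii cannot coexist, so there is exactly one critical radius $r^*$, and $K=\{|x|=r^*\}$ is a single circle centered at the origin, as claimed. The points to verify most carefully are the rotation-invariance/uniqueness step, which must genuinely use the positivity of $u$, and the sign bookkeeping in the monotonicity argument.
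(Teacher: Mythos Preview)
Your argument is correct and is a genuinely different route from the one taken in the paper. The paper's proof is essentially a two-line combination of cited results: it invokes Theorem~11 of \cite{ArangoGomez2}, which asserts that for such quasilinear problems on a planar annulus the critical set is either a finite set of isolated points or a single Jordan curve, and then cites \cite{Bergner} for the existence of a unique radially symmetric solution; radial symmetry then forces the Jordan-curve alternative (a rotationally invariant finite set in an annulus not containing the origin would have to be empty, contradicting the interior maximum), and identifies it as a circle. Your approach instead reduces directly to the radial ODE, introduces the auxiliary function $\phi(r)=r\,u'(r)/\sqrt{1+u'(r)^2}$ satisfying $\phi'=r\,f(u)$, and extracts the uniqueness of the critical radius from an elementary sign argument using only that $f$ is nondecreasing. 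The advantage of your route is that it is entirely self-contained and in fact proves slightly more, namely the result for any nondecreasing analytic $f$ rather than just the constant case; the advantage of the paper's route is brevity, since the dichotomy from \cite{ArangoGomez2} packages the hard work. One small point worth tightening in your write-up: the uniqueness result invoked for the symmetry step should be checked to apply on annuli (the Introduction's citations are phrased for general domains, and the paper itself leans on \cite{Bergner} here), and you implicitly use $u>0$ although the lemma as stated omits this hypothesis---both are harmless in context but deserve a word.
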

\begin{proof}[Proof]
Due to the results of Theorem 11 in \cite{ArangoGomez2}, we know that the critical set $K$ of solution $u$ is either finitely many isolated critical points, or is made up of exactly one critical Jordan curve, that is, the critical points and critical Jordan curve can't exist at the same time. And according to the results of \cite{Bergner}, the constant mean curvature equation (\ref{1.1}) has a unique radial symmetric solution $u$ in concentric circle annulus domain $\Omega$, so we deduce that the  critical set $K$ of solution $u$ reduces to exactly one critical Jordan curve $C$, and $C$ is a circle centered at the origin.
\end{proof}
\begin{Remark}\label{re3.4} 
For the case of dimension $n=2,$ if the domain $\Omega$ is a symmetric planar eccentric annulus with respect to one axis, with internal boundary $\gamma_I$ and external boundary $\gamma_E$, moreover, $\gamma_I$ and $\gamma_E$ are circles, then the constant mean curvature equation (\ref{1.1}) exists only a finite number of critical points, the result from \cite{ArangoGomez2}. At the same time, if critical set $K$ exists a Jordan curve $C$ in an annulus domain $\Omega$, then the geometry structure of $N_\theta$ as follows:

(1) For any $\theta\in S^1,$ $N_\theta$ contains the curve $C$.

(2) There exists exactly two corresponding branching points of $N_\theta$ in critical Jordan curve $C,$ denotes by points $p$ and $p^\ast,$ and $\theta$ is tangent to $C$ at this two points.

(3) The nodal set $N_\theta$ exists exactly four branches departing from $p$ (respectively $p^\ast$), where two branches of them are included in $C,$ and the other two branches end respectively at the internal boundary $\gamma_I$ and external boundary $\gamma_E.$

(4) The points on external boundary $\gamma_E$ (respectively $\gamma_I$), the branches starting at $q$ and $q^\ast$ end, are corresponding points, and $\theta$ is tangent to $\partial\Omega$ at this two points.

(5) $N_\theta$ contains no any other points except for the above (1) $\sim$ (4).
\end{Remark}
For a given $\theta\in S^1,$ next we will give the geometric structure of $N_\theta.$
\begin{center}
  \includegraphics[width=4cm,height=4cm]{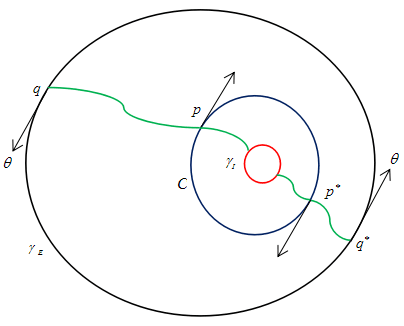}\\
  \scriptsize {\bf Figure 1}~~The geometric distribution of $N_\theta$.
\end{center}

\section{The critical set for higher dimension spaces} 
~~~~This section is aimed to deduce that the geometric structure of critical set $K$ for mean curvature equation (\ref{1.1}) in higher dimension spaces. Next we will give some results about radial symmetric solution of mean curvature equation (\ref{1.1}) in strictly convex domains, as follows:
\begin{Lemma} \label{le4.1} 
(see\cite[Theorem 8.2.2]{PucciSerrin}) Let $\Omega$ be an open ball in $\mathbb{R}^n, n\geq3.$ Assume $u\in C^1(\overline{\Omega})$ is a distribution solution of the problem (\ref{1.1}), while the function $f(u)$ is locally Lipschitz continuous in $\mathbb{R}^{+}_0.$ Then every solution $u\in C^1(\overline{\Omega})$ is radially symmetric and satisfies $u_r<0$.
\end{Lemma}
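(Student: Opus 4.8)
The plan is to prove this by the method of moving planes of Gidas--Ni--Nirenberg, in the form adapted by Serrin to quasilinear operators satisfying the structure condition $A(h)+hA'(h)>0$. Write the equation as $\mathrm{div}\,a(\nabla u)=f(u)$ with $a(p)=A(|p|)p$ and $A(h)=(1+h^2)^{-1/2}$, and take $\Omega=B_R(0)$. First I would fix a direction, say $e_1$, and for $0<\lambda<R$ introduce the hyperplane $T_\lambda=\{x_1=\lambda\}$, the cap $\Sigma_\lambda=\{x\in\Omega:x_1>\lambda\}$, the reflection $x^\lambda=(2\lambda-x_1,x_2,\dots,x_n)$, and the comparison function $w_\lambda(x)=u(x^\lambda)-u(x)$. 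Since $B_R(0)$ is invariant under reflection across $T_\lambda$ and $f$ depends only on $u$, both $u$ and $u(\cdot^\lambda)$ solve the same equation on $\Sigma_\lambda$; on the flat face $T_\lambda\cap\Omega$ we have $w_\lambda=0$, and on the curved part $\partial\Sigma_\lambda\cap\partial\Omega$ we have $u=0$ while $u(x^\lambda)\ge 0$, so $w_\lambda\ge 0$ there.

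The second step would be to derive the linear equation governing $w_\lambda$. Subtracting the two equations and using the mean value theorem in integral form, $a(\nabla u(x^\lambda))-a(\nabla u(x))=B_\lambda(x)\nabla w_\lambda(x)$ with $B_\lambda(x)=\int_0^1 Da\big(s\nabla u(x^\lambda)+(1-s)\nabla u(x)\big)\,ds$, and similarly $f(u(x^\lambda))-f(u(x))=c_\lambda(x)w_\lambda(x)$ with $c_\lambda\in L^\infty$ by the local Lipschitz hypothesis on $f$. Thus $w_\lambda$ satisfies the linear equation $\mathrm{div}(B_\lambda\nabla w_\lambda)=c_\lambda w_\lambda$ in $\Sigma_\lambda$. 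Here the structure condition enters decisively: a direct computation gives $Da(p)=A(|p|)I+A'(|p|)|p|^{-1}p\otimes p$, whose eigenvalues are $A(|p|)>0$ and $A(|p|)+|p|A'(|p|)>0$, so $Da(p)$ is positive definite; because $u\in C^1(\overline\Omega)$ the gradient is bounded and these eigenvalues stay bounded away from $0$, making $B_\lambda$ uniformly elliptic on $\overline{\Sigma_\lambda}$.

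The third step is the moving-plane iteration itself. For $\lambda$ close to $R$ the cap $\Sigma_\lambda$ is a narrow domain, so the maximum principle for narrow domains (which tolerates the zeroth-order term $c_\lambda$ irrespective of its sign, the smallness of the width dominating $\|c_\lambda\|_\infty$) yields $w_\lambda\ge 0$ in $\Sigma_\lambda$. Let $\lambda_0$ be the infimum of those $\lambda$ for which $w_\mu\ge 0$ in $\Sigma_\mu$ for all $\mu\ge\lambda$. By continuity $w_{\lambda_0}\ge 0$; the strong maximum principle forces either $w_{\lambda_0}\equiv 0$ or $w_{\lambda_0}>0$ in the interior. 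If $\lambda_0>0$ the identity $w_{\lambda_0}\equiv 0$ is impossible, since on the curved boundary the reflected point lies in the interior of $\Omega$ where $u>0$, contradicting $u=0$ there; hence $w_{\lambda_0}>0$, and the Hopf boundary lemma applied at $T_{\lambda_0}$ gives $\partial_{x_1}w_{\lambda_0}<0$ on $T_{\lambda_0}\cap\Omega$. A standard compactness argument then lets one slide the plane slightly past $\lambda_0$ while preserving positivity, contradicting the minimality of $\lambda_0$. Therefore $\lambda_0=0$, which gives symmetry of $u$ in $x_1$; since $e_1$ was arbitrary and $B_R(0)$ is rotationally invariant, $u$ is radially symmetric. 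Monotonicity $u_r<0$ for $0<r<R$ then follows from $w_\lambda>0$ for each $0<\lambda<R$, which reads $u(x^\lambda)>u(x)$ whenever $x_1>\lambda$, together with the strict sign of $\partial_{x_1}u$ coming from the Hopf lemma.

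I expect the main obstacle to be the quasilinear, non-uniformly elliptic nature of the operator: a priori the ellipticity of $a$ degenerates as $|\nabla u|\to\infty$, so the clean uniform ellipticity of $B_\lambda$ used throughout rests on the regularity assumption $u\in C^1(\overline\Omega)$ to keep $|\nabla u|$ bounded. Making the maximum principle and the Hopf boundary lemma rigorous for the variable-coefficient divergence-form operator $\mathrm{div}(B_\lambda\nabla\,\cdot)$ with merely bounded measurable zeroth-order coefficient $c_\lambda$ --- especially the boundary point lemma needed at $T_{\lambda_0}$ --- is the delicate point, and is precisely where Serrin's and Pucci--Serrin's extensions of the classical theory are invoked.
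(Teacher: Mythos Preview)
The paper does not supply its own proof of this lemma; it is quoted verbatim as a known result from Pucci--Serrin \cite[Theorem~8.2.2]{PucciSerrin}, with no argument given in the text. Your outline via the moving-plane method of Gidas--Ni--Nirenberg, adapted to the quasilinear divergence-form operator by linearizing the difference $a(\nabla u(x^\lambda))-a(\nabla u(x))$ and exploiting the structural condition $A(h)+hA'(h)>0$ to secure uniform ellipticity on $C^1(\overline\Omega)$ solutions, is precisely the approach taken in the cited reference, so your proposal is consistent with the source the paper relies on.

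One small point worth tightening: your argument that $w_{\lambda_0}\equiv 0$ is impossible for $\lambda_0>0$ uses $u>0$ in the interior of $\Omega$, which is not stated explicitly in the lemma (the hypothesis that $f$ is locally Lipschitz on $\mathbb{R}^+_0$ tacitly presumes $u\ge 0$, and positivity in the interior then follows from the strong maximum principle applied to $u$ itself, but you should say so). Otherwise the sketch is sound and matches the standard treatment.
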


 According to the above results about symmetric solution of problem (\ref{1.1}), next we investigate the geometric distribution of critical set for higher dimension spaces. Our difficulty is to prove the rationality of projection of higher dimensional space onto two dimension plane.

\begin{Theorem}\label{th4.2} 
 Let $\Omega$ be a smooth, bounded and strictly convex domain of rotational symmetry with respect to an axis in $\mathbb{R}^n(n\geq3).$ Suppose that $f$ is a real analytic, nondecreasing function in $\mathbb{R}$ and that $u$ is a positive solution of equation (\ref{1.1}). Then $u$ has one unique nondegenerate critical point in $\Omega.$
\end{Theorem}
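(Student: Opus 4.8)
The plan is to reduce the $n$-dimensional problem to the planar Theorem~\ref{th3.1} by exploiting the rotational symmetry, and then lift the resulting uniqueness statement back to $\Omega$. First I would invoke the symmetry theory for the mean curvature equation (as in Lemma~\ref{le4.1} and the references \cite{Serrin,LiNi,Bergner}): since $\Omega$ is rotationally symmetric about an axis and the positive solution $u$ is unique and analytic, $u$ must inherit that symmetry, so in cylindrical coordinates $(\rho,z)\in[0,\infty)\times\mathbb{R}$ (with $\rho$ the distance to the axis, $z$ the axial coordinate, and $\omega\in S^{n-2}$ the angular variable) one has $u=u(\rho,z)$. Consequently the critical set $K=\{\nabla u=0\}$ is rotationally symmetric, and a point is critical precisely when $w_\rho=w_z=0$, where $w(\rho,z):=u(\rho,z)$ is the profile. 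Writing $\Sigma=\Omega\cap P$ for the full two-dimensional cross-section through the axis (a strictly convex planar domain, invariant under the reflection $R$ across the axis, since a plane section of a strictly convex body is strictly convex) and extending $w$ evenly in $\rho$ across the axis, the analysis of $K$ reduces to the analysis of the critical points of $w$ on $\Sigma$.

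Second, I would compute the equation satisfied by $w$. Inserting a rotationally symmetric $u$ into (\ref{1.2}) collapses the angular directions and produces a two-dimensional operator of mean-curvature type carrying one extra first-order term,
\[
\frac{1}{\rho^{\,n-2}}\,\mathrm{div}_{\rho,z}\!\left(\rho^{\,n-2}\,\frac{\nabla w}{\sqrt{1+|\nabla w|^{2}}}\right)=f(w),
\]
so that $w$ solves a planar equation of the form treated in Section~2 together with the singular lower-order term $\frac{n-2}{\rho}\,\frac{w_\rho}{\sqrt{1+|\nabla w|^2}}$. Differentiating this reduced equation in an arbitrary planar direction $\theta$, exactly as in (\ref{2.3}), shows that $v=w_\theta$ satisfies a linear, uniformly elliptic equation whose zeroth-order coefficient is $f'(u)\ge 0$; the nonnegativity of $f'$ and the zero boundary data then drive the maximum-principle argument of Lemma~\ref{le2.1}, proving that $N_\theta$ cannot enclose any subdomain of $\Sigma$. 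Applying the nodal-set structure of Lemma~\ref{le2.2} off the axis yields $M_\theta=\varnothing$, and hence, as in Remark~\ref{re2.3}, that $w$ is a Morse function with $\mathrm{rank}\,D^2w=2$ throughout $\Sigma$.

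Third, with these ingredients I would run the flow argument of the proof of Theorem~\ref{th3.1}: build the field $X=|\nabla w|^{-2}D^2w\cdot(-w_z,w_\rho)$, regularize it to a Lipschitz field $Z$ tangent to $\partial\Sigma$, generate the one-parameter group $\varphi_t$, and use $\varphi_\pi(N_0)=N_0$ together with $N_0\cong[0,1]$ to conclude that $w$ has exactly one, necessarily nondegenerate, critical point $x_0\in\Sigma$. Then I would locate and lift $x_0$. Since $w\circ R=w$, the reflection $R$ permutes the one-element critical set, forcing $R(x_0)=x_0$, so $x_0=(0,z_0)$ lies on the axis; evenness of $w$ in $\rho$ gives $w_{\rho z}(0,z_0)=0$, whence the planar Hessian is diagonal with nonzero entries $w_{\rho\rho}(0,z_0),w_{zz}(0,z_0)$. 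The axial point $x_0$ corresponds to a single point $\bar x_0\in\Omega$, where $D^2u(\bar x_0)$ is the diagonal matrix with $n-1$ transverse eigenvalues equal to $w_{\rho\rho}(0,z_0)=\lim_{\rho\to0}w_\rho/\rho$ and axial eigenvalue $w_{zz}(0,z_0)$; both being nonzero, $\bar x_0$ is the unique nondegenerate critical point of $u$.

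The step I expect to be the main obstacle is precisely the one the authors flag as ``the rationality of projection'': the reduced operator has coefficients that become singular on the axis $\{\rho=0\}$, so neither Lemma~\ref{le2.1} nor the Cheng-type result of Lemma~\ref{le2.2} applies there off the shelf. Controlling the nodal and critical sets of $w_\theta$ near the axis — for instance by verifying that the parities in $\rho$ make $w_\rho/\rho$ bounded and that the even extension of $w$ is a genuine analytic solution of a regular planar equation, so that the machinery of Sections~2--3 may be transplanted to $\Sigma$ — is the delicate point on which the entire reduction rests.
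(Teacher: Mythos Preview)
Your plan differs substantively from the paper's. The paper does not try to transplant the full flow argument of Theorem~\ref{th3.1} to the cross-section $\Sigma$. Instead it (i) invokes the strict radial monotonicity $\partial v/\partial r<0$ for $r\neq 0$ (from \cite{PucciSerrin,Serrin2}), which immediately puts every critical point of $u$ on the $x_n$ axis, so only the single nodal set $\mathscr{N}=\{u_{x_n}=0\}$ has to be analysed; (ii) observes that $\mathscr{N}$ is rotationally invariant and hence is the rotation of its trace $N_2$ in the $x_1,x_n$ plane, and that differentiating the reduced equation in the axial direction $x_n$ only yields a genuinely homogeneous linear equation for $v_{x_n}$, so the Lemma~\ref{le2.1} argument applies to $N_2$; (iii) concludes, using that $N_2$ is symmetric about the $x_n$ axis and meets $\partial\Omega$ only at the two points where $\{x_n=a\}$ is tangent, that $N_2$ intersects the axis in exactly one point; and (iv) obtains nondegeneracy not from a planar Morse statement but by applying the Hopf boundary lemma to each $u_{x_k}$ in the half-domain $\{x_k>0\}\cap\Omega$ (where $u_{x_k}<0$) to get $u_{x_kx_k}(p)<0$ for $1\le k\le n-1$, and to $u_{x_n}$ across $\mathscr{N}$ to get $u_{x_nx_n}(p)<0$.

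The obstacle in your route is not only the axis singularity you flag; it already appears away from the axis. Differentiating the reduced equation
\[
\sum_{i,j}a_{ij}(\nabla w)w_{ij}+\frac{n-2}{\rho\sqrt{1+|\nabla w|^{2}}}\,w_\rho=f(w)
\]
in an arbitrary planar direction $\theta=(\theta_\rho,\theta_z)$ does \emph{not} give an equation of the form (\ref{2.3}): the derivative $\partial_\theta(\rho^{-1})=-\theta_\rho\,\rho^{-2}$ produces the inhomogeneous source term
\[
\frac{n-2}{\rho^{2}\sqrt{1+|\nabla w|^{2}}}\,w_\rho\,\theta_\rho
\]
on the right (this is precisely the right-hand side of the paper's (\ref{4.12})). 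Hence, for every $\theta$ with $\theta_\rho\neq 0$, the equation satisfied by $w_\theta$ is inhomogeneous, the maximum-principle step behind Lemma~\ref{le2.1} no longer forces $w_\theta\equiv 0$ inside a closed nodal curve, and you cannot conclude $M_\theta=\varnothing$ for all $\theta$. Without that, neither the Morse property of $w$ nor the construction of the Lipschitz field $Z$ and the flow $\varphi_t$ from Theorem~\ref{th3.1} is available on $\Sigma$. The paper avoids this entirely by needing only the axial direction $\theta=x_n$ (for which $\theta_\rho=0$ and the source term vanishes), and by deriving nondegeneracy from $n$-dimensional Hopf arguments rather than from a two-dimensional Morse lemma.
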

\begin{proof}[Proof]
 Without loss generality, let $\Omega$ be a domain of revolution formed by taking a strictly convex planar domain in the $x_1,x_n$ plane with respect to the $x_n$ axis. In the sequel, $x=(x',x_n), x'=(x_1,\cdots,x_{n-1})$ and $r=\sqrt{x_1^2+\cdots+x_{n-1}^2}.$

Due to the results of Pucci and Serrin \cite{PucciSerrin,Serrin2}, we deduce that the solution $u$ satisfies
\begin{equation}\label{4.1}\begin{array}{l}
u(x',x_n)=u(|x'|,x_n)\triangleq v(r,x_n)
\end{array}\end{equation}
and
\begin{equation}\label{4.2}\begin{array}{l}
\frac{\partial v}{\partial r}(r,x_n)<0$ $ $ for $ $ $  r\neq0.
\end{array}\end{equation}

From (\ref{4.2}), we can know that the critical points of $u$ lie on $x_n$ axis. Next, according to (\ref{4.1}) we have that
\begin{equation}\label{4.3}\begin{array}{l}
u_{x_n}(x',x_n)=v_{x_n}(r,x_n).
\end{array}\end{equation}
Moreover, we can deduce that $u_{x_n}$ satisfies the following equation
\[(Lu)_{x_n}=\sum\limits_{i,j=1}^{n}
a_{ij}(\nabla u)\frac{\partial^2 u_{x_n}}{\partial x_i \partial x_j}+\sum\limits_{i,j=1}^{n}
\frac{\partial a_{ij}(\nabla u) }{\partial x_n }\frac{\partial^2 u}{\partial x_i \partial x_j}=f'(u)u_{x_n},~~~n\geq 2,\]
That is
\begin{equation}\label{4.4}\begin{array}{l}
\mathscr{L}u_{x_n}=\sum\limits_{i,j=1}^{n}
a_{ij}(\nabla u)\frac{\partial^2 u_{x_n}}{\partial x_i \partial x_j}+\sum\limits_{i,j=1}^{n}\frac{\partial^2 u}{\partial x_i \partial x_j}
\frac{\partial a_{ij}(\nabla u)}{\partial x_n}-f'(u)u_{x_n}=0,
\end{array}\end{equation}
where $a_{ij}=\frac{1}{\sqrt{1+|\nabla u|^2}}(\delta_{ij}-\frac{u_{x_i}u_{x_j}}{1+|\nabla u|^2})$ and $\frac{\partial a_{ij}(\nabla u)}{\partial x_n }$ as the first derivative of $u_{x_n}$.

By (\ref{1.1}), the strict convexity of $\Omega$ and the Hopf boundary point lemma, we can know that $u_{x_n}$ vanishes precisely on the $n-2$ dimensional sphere given by
\[S=\{x_n=a\}\cap\partial\Omega,\]
for some $a\in \mathbb{R}.$ For convenience, we define the nodal set
\[\mathscr{N}=\{x\in \Omega|u_{x_n}(x)=0\}.\]
It is clear that all critical points of solution $u$ are contained in $\mathscr{N}$. Also from (\ref{4.3}), $\mathscr{N}$ is rotationally invariant about the $x_n$ axis.

So we turn the mean curvature equation (\ref{1.1}) for dimension $n$
\begin{equation}\label{4.5}\begin{array}{l}
Lu=div(\frac{\nabla u}{\sqrt{1+|\nabla u|^2}})=f(u)
\end{array}\end{equation}
into the following similar mean curvature equation for dimension 2
\[Lv=div(\frac{\nabla v}{\sqrt{1+|\nabla v|^2}})+\frac{1}{\sqrt{1+|\nabla v|^2}}\frac{n-2}{r}v_r=f(v),\]
that is
\begin{equation}\label{4.6}\begin{array}{l}
Lv=\sum\limits_{i,j=1}^{2}a_{ij}(\nabla v)v_{ij}+\frac{1}{\sqrt{1+|\nabla v|^2}}\frac{n-2}{r}v_r=f(v),
\end{array}\end{equation}
where $\nabla v=(\frac{\partial v}{\partial r},\frac{\partial v}{\partial x_n}),$  $a_{ij}=\frac{1}{\sqrt{1+|\nabla v|^2}}(\delta_{ij}-\frac{v_iv_j}{1+|\nabla v|^2})$ and $v_1=\frac{\partial v}{\partial r}, v_2=\frac{\partial v}{\partial x_n}.$

 Next, the proof is essentially same as the proof of Theorem 2 in \cite{CabreChanillo}. Their work is based on the results of Gidas, Ni and Nirenberg \cite{Gidas}, the ideas of Payne \cite{Payne} and Sperb \cite{Sperb}. To prove that, on one hand, whenever critical set has exactly one point, since all critical points of $u$ are contained in $\mathscr{N}\cap\{x_2=\cdots=x_{n-1}=0\}$ and lie on the $x_n$ axis. The nodal set $\mathscr{N}=\{x\in \Omega | u_{x_n}(x)=0\}$ is rotationally invariant about the $x_n$ axis, formed by a set $N_2$ contained in the $x_1,x_n$ 2-dimensional plane rotation about the $x_n$ axis, by (\ref{4.6}), where $N_2$ can be seen as the projection of $\mathscr{N}$ in the $x_1,x_n$ 2-dimensional plane and $\mathscr{N}$ cannot enclose any subdomain of $\Omega$ (By Lemma \ref{le2.1}, $N_2$ cannot enclose any planar subdomain of $\Omega\cap\{x_2=\cdots=x_{n-1}=0\},$ where $N_2$ looks like the nodal set of some homogeneous polynomial in $x_1,x_n.$). Because $N_2$ is symmetric with respect to the $x_n$ axis and intersects the $x_n$ axis at exactly one point, hence we prove the uniqueness of critical points.

On the other hand, how to show that critical point $p$ is nondegenerate, we restatement that $u$ is rotationally symmetric with respect to $x_n$ axis and critical point $p$ lies on this axis. From (\ref{4.1}) and (\ref{4.2}), we have that $\{u_{x_k}=0\}=\{x_k=0\}\cap \Omega$ for all $1\leq k\leq n-1.$ Hence $u_{x_ix_j}(p)=0$ for any index $1\leq i<j\leq n,$ that is, $D^2u(p)$ is diagonal. By (\ref{4.2}), we can know that $u_{x_k}<0$ in domain $\mathscr{D}_k=\{x_k>0\}\cap \Omega$ for $1\leq k\leq n-1.$ What's more, in domain $\mathscr{D}_k$, $u_{x_k}$ satisfies
\begin{equation}\label{4.7}\begin{array}{l}
\mathscr{L}u_{x_k}=\sum\limits_{i,j=1}^{n}
a_{ij}(\nabla u)\frac{\partial^2 u_{x_k}}{\partial x_i \partial x_j}+\sum\limits_{i,j=1}^{n}\frac{\partial^2 u}{\partial x_i \partial x_j}
\frac{\partial a_{ij}(\nabla u)}{\partial x_k}-f'(u)u_{x_k}=0,
\end{array}\end{equation}
where $\frac{\partial a_{ij}(\nabla u)}{\partial x_k }$ as the first derivative of $u_{x_k}$. According to the Hopf boundary point lemma, we deduce that $u_{x_kx_k}(p)<0$ for all $1\leq k\leq n-1,$ where critical point $p\in \partial \mathscr{D}_k.$

Finally, we recall that the function $u_{x_n}$ satisfies (\ref{4.4}). By the definition of $\mathscr{N}$, $u_{x_n}<0$ to one side of $\mathscr{N}.$ Then applying the Hopf boundary point lemma to $u_{x_n}$ at $p\in \mathscr{N},$ we have that $u_{x_nx_n}(p)<0.$ So we prove that the Hessian matrix $D^2u(x)$ of $u$ is diagonal and negative definite at critical point $p$, hence $p$ is a unique nondegenerate critical point.
\end{proof}

Next we will study the geometric structure about critical set $K$ of solutions $u$ for the constant mean curvature equation (i.e.$f(u)=H, H=constant$) in a symmetric concentric spherical annulus domain with external boundary $S^{n-1}_E$ and internal boundary $S^{n-1}_I$ of $\mathbb{R}^n(n\geq3),$ where the spherical surfaces $S^{n-1}_E$ and $S^{n-1}_I$ centered at the origin. It is known that for $H$ small enough, there exists a unique symmetric solution $u$ for the constant mean curvature equation (\ref{1.1}), where $u$ satisfies that $\frac{\partial u}{\partial r}>0$ for the case $H=0$ (see \cite{Bergner}).

\begin{Theorem}\label{th4.3} 
Let $\Omega$ be a symmetric concentric spherical annulus domain with external boundary $S^{n-1}_E$ and internal boundary $S^{n-1}_I$ in $\mathbb{R}^n(n\geq3),$ where the spherical surfaces $S^{n-1}_E$ and $S^{n-1}_I$ centered at the origin. Let $u$ be a solution of the constant mean curvature equation (\ref{1.1}) for the case $H=0$. Then the critical set $K$ of $u$ exists exactly one closed surface $S$, and $S$ is a spherical surface centered at the origin.
\end{Theorem}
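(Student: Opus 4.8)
The plan is to reduce the problem to a one–dimensional analysis by exploiting the full orthogonal symmetry of the concentric annulus. The domain $\Omega$, the mean curvature operator $L$, the constant right–hand side $H$, and the boundary data are all invariant under every rotation $O\in O(n)$ about the origin. Hence, if $u$ solves (\ref{1.1}), so does $u\circ O$; since the symmetric solution is unique (by the results of Pucci--Serrin and Bergner quoted above), we must have $u\circ O=u$ for every $O\in O(n)$, so $u$ is radially symmetric, $u(x)=w(|x|)$ for a single–variable profile $w$ defined on $[a,b]$, where $a<b$ are the radii of $S^{n-1}_I$ and $S^{n-1}_E$. This step plays the role that Lemma \ref{le4.1} plays for the ball, now transferred to the annulus.

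Next I would identify the critical set with the level spheres of $w'$. For $x\neq 0$ one has $\nabla u(x)=w'(|x|)\,x/|x|$, and since the origin does not belong to $\overline{\Omega}$, it follows that $\nabla u(x)=0$ if and only if $w'(|x|)=0$. Consequently $K=\{x\in\Omega:\ w'(|x|)=0\}$ is a union of spheres centered at the origin, one for each zero $r_0\in(a,b)$ of $w'$. In particular every component of $K$ is automatically a sphere centered at the origin, so the geometric part of the statement is immediate, and the whole theorem reduces to the claim that $w'$ has \emph{exactly one} zero in $(a,b)$.

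To count the zeros of $w'$ I would pass to the radial form of (\ref{1.1}). In polar coordinates the mean curvature operator acting on a radial profile reads $r^{1-n}\bigl(r^{\,n-1}w'/\sqrt{1+w'^{2}}\bigr)'$, so the equation becomes an ordinary differential equation that integrates once to the first integral
\[
r^{\,n-1}\,\frac{w'(r)}{\sqrt{1+w'(r)^{2}}}=\frac{H}{n}\,r^{\,n}+C ,
\]
with a constant $C$ fixed by the boundary data. Since $w'/\sqrt{1+w'^{2}}$ vanishes exactly where $w'$ does, the critical radii are the roots of $\tfrac{H}{n}r^{\,n}+C=0$ lying in $(a,b)$. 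As $r\mapsto r^{\,n}$ is strictly increasing on $(0,\infty)$, this relation is satisfied by at most one positive $r$, so $K$ contains at most one sphere. Existence of such a radius should then follow from the fact that $w$ is nonconstant and matches the prescribed values on the two spheres, which forces $w'$ to change sign once inside the annulus; hence $K$ is exactly one sphere $S=\{|x|=r_0\}$ centered at the origin.

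The main obstacle is precisely this last step, namely establishing that $w'$ vanishes at one and only one interior radius, i.e. both the uniqueness \emph{and} the existence of the critical radius. Uniqueness is handed to us cleanly by the monotonicity of the first integral, but existence must be extracted from the prescribed boundary behaviour together with the qualitative properties of the radial solution recorded before the statement and cited from \cite{Bergner}; the degenerate balance of the two terms in the first integral is the delicate point and is exactly where the boundary conditions enter. One must also make the symmetry–from–uniqueness argument fully rigorous, since it tacitly uses that (\ref{1.1}) admits no solution other than the radial one. Once the single sign change of $w'$ is secured, the remaining assertions---that $S$ is closed and centered at the origin---are immediate from the radial reduction.
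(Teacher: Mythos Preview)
Your route is genuinely different from the paper's. The paper does \emph{not} pass to a one–variable ODE; it uses only the axial symmetry about the $x_n$–axis, writes $u(x',x_n)=v(x_n,r)$ with $r=|x'|$, reduces (\ref{1.1}) to a two–variable equation (\ref{4.11}) on the $(x_n,r)$ half–plane, derives the linear inequality (\ref{4.13}) for directional derivatives $v_\theta$, and then invokes the planar Lemma \ref{le3.3} to conclude that the critical set of $v$ is a single circle, which upon rotation about $x_n$ yields the sphere. So the paper's argument is a $2$D reduction feeding into an earlier $2$D lemma, whereas you exploit the full $O(n)$ symmetry to reduce all the way to $1$D. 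Your reduction is more elementary and your identification $K=\{w'(|x|)=0\}$ is correct and cleaner than the paper's rotation picture.

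However, there is a concrete gap precisely at the point you yourself call ``delicate.'' When $H=0$ your first integral collapses to
\[
r^{\,n-1}\,\frac{w'(r)}{\sqrt{1+w'(r)^{2}}}=C,
\]
a \emph{constant}; the term $\tfrac{H}{n}r^{\,n}$ that your uniqueness argument relies on is absent, so the monotonicity of $r\mapsto r^{\,n}$ buys nothing here. The displayed identity forces $w'$ either to vanish nowhere ($C\neq 0$) or to vanish identically ($C=0$). With the Dirichlet condition $w(a)=w(b)=0$, the first alternative makes $w$ strictly monotone and is impossible, so $C=0$ and $u\equiv 0$: the minimal–surface equation with zero boundary data admits only the trivial solution, and no isolated critical sphere appears. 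Thus your argument, rather than establishing the stated conclusion for $H=0$, actually shows that the hypothesis is degenerate; the proof you outline works cleanly for small nonzero $H$ (where the first integral genuinely gives at most one root and the sign change of $w'$ gives existence), which is presumably the intended regime. The paper's proof does not confront this because it routes through Lemma \ref{le3.3} and the monotonicity $\partial v/\partial r>0$ imported from \cite{Bergner}, never writing down the radial first integral explicitly.
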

\begin{proof}[Proof]
Without loss of generality, we assume that the domain $\Omega$ is a domain of revolution formed by taking a symmetric planar concentric circle annulus domain $\Omega'$ in the $x_1, x_n$ plane about $x_n$ axis, where the domain $\Omega'$ centered at the origin.

By the results of Bergner \cite{Bergner}, so we deduce that the solution $u$ satisfies
\begin{equation}\label{4.8}\begin{array}{l}
u(x',x_n)=u(|x'|,x_n)\triangleq v(x_n,r)
\end{array}\end{equation}
and
\begin{equation}\label{4.9}\begin{array}{l}
\frac{\partial v}{\partial r}(x_n,r)>0,
\end{array}\end{equation}
where $x'=(x_1,\cdots,x_{n-1})$ and $r=\sqrt{x_1^2+\cdots+x_{n-1}^2}.$

So we turn the constant mean curvature equation (\ref{1.1}) for dimension $n$
\begin{equation}\label{4.10}\begin{array}{l}
Lu=div(\frac{\nabla u}{\sqrt{1+|\nabla u|^2}})=0
\end{array}\end{equation}
into the following similar mean curvature equation for dimension 2
\[Lv=div(\frac{\nabla v}{\sqrt{1+|\nabla v|^2}})+\frac{1}{\sqrt{1+|\nabla v|^2}}\frac{n-2}{r}v_r=0,\]
that is
\begin{equation}\label{4.11}\begin{array}{l}
Lv=\sum\limits_{i,j=1}^{2}a_{ij}(\nabla v)v_{ij}+\frac{1}{\sqrt{1+|\nabla v|^2}}\frac{n-2}{r}v_r=0,
\end{array}\end{equation}
where $\nabla v=(\frac{\partial v}{\partial x_n},\frac{\partial v}{\partial r}),$  $a_{ij}=\frac{1}{\sqrt{1+|\nabla v|^2}}(\delta_{ij}-\frac{v_iv_j}{1+|\nabla v|^2})$ and $v_1=\frac{\partial v}{\partial x_n}, v_2=\frac{\partial v}{\partial r}.$

For any $\theta=(\theta_1,\theta_2)=(\cos \alpha,\sin \alpha)\in S^1,$ where $\alpha \in [0,\pi).$ We turn quasilinear elliptic equation associated to $v$ into a linear elliptic equation associated to $w=v_{\theta}=\nabla v\cdot \theta.$ Firstly, we differentiate the equation (\ref{4.11}), then take inner product with $\theta.$ In order to conveniently, we denote $y=(y_1,y_2)=(x_n,r),$ hence we can get the following equation
\begin{equation}\label{4.12}\begin{array}{l}
L_{v}w+h_1(y)\frac{\partial w}{\partial y_1}+h_2(y)\frac{\partial w}{\partial y_2}+\frac{1}{(1+|\nabla v|^2)^{\frac{3}{2}}}\frac{n-2}{r}[(1+v_{y_1}^2)\frac{\partial w}{\partial y_2}-v_{y_1}v_{y_2}\frac{\partial w}{\partial y_1}] =\frac{1}{\sqrt{1+|\nabla v|^2}} \frac{n-2}{r^2}v_r \theta_2,
\end{array}\end{equation}
where
\[L_{v}w=\sum\limits_{i,j=1}^{2}
a_{ij}(\nabla v)\frac{\partial^2 w}{\partial y_i \partial y_j}\]
and
\[h_{k}(y)=\sum\limits_{i, j=1}^{2} v_{y_i y_j}\frac{\partial a_{ij}}{\partial v_{y_k}},~~~1\leq k\leq 2.\]
By (\ref{4.9}) and (\ref{4.12}), we deduce that
\begin{equation}\label{4.13}\begin{array}{l}
Lw=L_{v}w+h_1(y)\frac{\partial w}{\partial y_1}+h_2(y)\frac{\partial w}{\partial y_2}+\frac{1}{(1+|\nabla v|^2)^{\frac{3}{2}}}\frac{n-2}{r}[(1+v_{y_1}^2)\frac{\partial w}{\partial y_2}-v_{y_1}v_{y_2}\frac{\partial w}{\partial y_1}]\geq 0.
\end{array}\end{equation}
By (\ref{4.13}), so we can consider the result that graphic projected onto $x_1, x_n$ plane. Due to Lemma \ref{le3.3}, we have known that the constant mean curvature equation (\ref{1.1}) exists only one unique critical Jordan curve $C$ in symmetric planar concentric circle annulus domain. In turn, we rotate the geometry distribution of critical Jordan curve C in symmetric planar concentric circle annulus domain with respect to $x_n$ axis. Therefore we get the geometry distribution of critical set $K$ in a symmetric concentric spherical annulus domain $\Omega$ as shown in the following Figure 2.
\begin{center}
  \includegraphics[width=11.0cm,height=5.0cm]{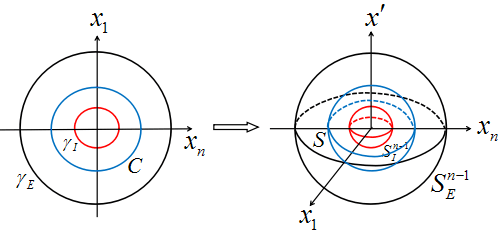}\\
  \scriptsize {\bf Figure 2} ~~The geometric distribution of critical set $K$ in a symmetric concentric spherical annulus domain.
\end{center}

Figure 2 shows that the critical set $K$ of $u$ exists exactly one closed surface $S$, and $S$ is a spherical surface centered at the origin.
\end{proof}

The rest of this section is aimed to prove that, in the case of the constant mean curvature equation in a rotationally symmetric eccentric spherical annulus domain $\Omega$ with respect to an axis, the critical set $K$ does not exist a critical closed surface $S$, where $S$ is rotationally symmetric with respect to an axis.
\begin{Theorem}\label{th4.4} 
 Let $\Omega$ be a rotationally symmetric eccentric spherical annulus domain with respect to an axis in $\mathbb{R}^n(n\geq3),$ which has external boundary $S^{n-1}_E$ and internal boundary $S^{n-1}_I.$ Let $u$ be a solution of the constant mean curvature equation (\ref{1.1}) for the case $H=0$. Then the critical set $K$ of $u$ does not exist a closed surface $S$, where $S$ is rotationally symmetric with respect to an axis.
\end{Theorem}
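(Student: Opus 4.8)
The plan is to run the same reduction-and-projection scheme as in the concentric Theorem \ref{th4.3}, but to feed the projected two-dimensional problem into the eccentric planar classification of Remark \ref{re3.4} rather than into Lemma \ref{le3.3}. First I would use the rotational symmetry of $\Omega$ about its axis, which I take to be the $x_n$-axis, together with the symmetry of the solution, to write $u(x)=v(x_n,r)$ with $r=|x'|$ and $x'=(x_1,\dots,x_{n-1})$, reducing the $n$-dimensional equation to the two-dimensional equation (\ref{4.11}) for $v$ on the planar eccentric annulus $\Omega'$ in the $x_1,x_n$ plane, at the cost of the extra lower-order term $\frac{1}{\sqrt{1+|\nabla v|^2}}\frac{n-2}{r}v_r$.

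Next I would record the precise correspondence between the critical set $K$ of $u$ and the critical set $K_v$ of $v$. Since $u_{x_n}=v_{x_n}$ and $u_{x_i}=v_r\,x_i/r$ for $1\le i\le n-1$, a point with $r>0$ is critical for $u$ if and only if $(x_n,r)$ is critical for $v$, whereas on the axis $r=0$ the condition collapses to $v_{x_n}=0$. Consequently $K$ is exactly the rotation of $K_v$ about the $x_n$-axis: an isolated critical point of $v$ on the axis yields an isolated critical point of $u$, an isolated critical point of $v$ with $r_0>0$ sweeps out a rotationally symmetric critical Jordan curve $C_i$ (an $(n-2)$-sphere), and, decisively, an $(n-1)$-dimensional rotationally symmetric closed surface $S\subset K$ can arise only from a one-dimensional critical Jordan curve of $v$ inside $\Omega'$. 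Thus it suffices to prove that $v$ possesses no critical Jordan curve.

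The heart of the argument is therefore the two-dimensional nonexistence of a critical Jordan curve for $v$. Differentiating (\ref{4.11}) along a direction $\theta\in S^1$ and taking the inner product gives, as in (\ref{4.12})--(\ref{4.13}), a linear elliptic relation $Lw\ge 0$ for $w=v_\theta$; once this is in force, the maximum-principle argument of Lemma \ref{le2.1} shows the nodal set of $w$ cannot enclose a subdomain, and Cheng's structure theorem (Lemma \ref{le2.2}) controls its critical points, so the nodal and critical geometry of $v$ coincides with that of a genuine planar solution. The dichotomy of \cite{ArangoGomez2} invoked in Lemma \ref{le3.3} then applies: the critical set is either finitely many isolated points or exactly one Jordan curve, and Remark \ref{re3.4} selects the first alternative for the eccentric annulus, in contrast to the fully radial concentric case where the extra symmetry forces the curve. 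Hence $K_v$ is a finite set of isolated points and carries no Jordan curve.

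Finally, rotating this finite isolated set about the axis produces exactly the isolated critical points $p_1,\dots,p_l$ on the axis and the rotationally symmetric critical Jordan curves $C_1,\dots,C_k$, and in particular no rotationally symmetric closed surface $S$, which is the assertion. I expect the main obstacle to be precisely the projection step flagged by the authors: one must verify that the reduced equation (\ref{4.11}), whose lower-order term is singular along the axis $r=0$, still supports the maximum principle of Lemma \ref{le2.1} and the nodal analysis of Lemma \ref{le2.2}, and one must control the sign of $v_r$—no longer globally one-signed as in the concentric case (\ref{4.9})—near the axis in order to secure the differential inequality (\ref{4.13}). Establishing this transfer rigorously, rather than reading it off Figure~2, is the crux of the proof.
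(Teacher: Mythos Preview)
Your proposal is correct and follows essentially the same projection-to-the-plane strategy as the paper: reduce via rotational symmetry to the two-dimensional equation (\ref{4.11}) for $v$ on the planar eccentric annulus, observe that a rotationally symmetric closed critical surface in $\Omega$ would project to a critical Jordan curve for $v$, and then rule this out using the planar classification. The only organizational difference is that the paper argues by contradiction in two separate cases---Case~1, where the hypothetical surface $S$ bounds a subdomain of $\Omega$ not containing $S^{n-1}_I$, so that the projected Jordan curve has simply connected interior and is excluded directly by Lemma~\ref{le2.1}; and Case~2, where $S$ encloses $S^{n-1}_I$, so that the projection is a Jordan curve surrounding the inner circle and is excluded by Remark~\ref{re3.4}---whereas you invoke Remark~\ref{re3.4} uniformly to dispose of any critical Jordan curve at once. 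Your route is slightly more economical and, in passing, already yields the structural description of Corollary~\ref{Co4.5}. Your closing caveats about the singularity of (\ref{4.11}) along $r=0$ and the loss of a global sign for $v_r$ (so that (\ref{4.13}) is no longer immediate) are well placed: the paper does not address these points either, handling the projection step informally through the figures.
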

\begin{proof}[Proof]
 The proof is based on the idea of Theorem \ref{th4.3}, and we prove the theorem for two cases. Without loss of generality, we assume that the center of spherical surfaces $S^{n-1}_E$ and $S^{n-1}_I$ both on $x_n$ axis, so we deduce that the solution $u$ satisfies
\[u(x',x_n)=u(|x'|,x_n)\]
where $x'=(x_1,\cdots,x_{n-1}).$

Case 1, if the critical set $K$ enclose a subdomain of $\Omega,$ denotes by closed surface $S$, where $S$ is rotationally symmetric with respect to $x_n$ axis. According to the assumptions, we can know that critical set $K$ is the closed surface $S$ and the center of $S$ on $x_n$ axis. Because the domain $\Omega$ and the solution $u$ are rotationally symmetric with respect to $x_n$ axis, in the same way, so we can consider the result that graphic projected onto a two-dimensional plane which pass through $x_n$ axis, without loss of generality, denotes by $x_1, x_n$ plane. Therefore we get the geometry distribution of critical Jordan curve $C$ in planar nonconvex domain as shown in the following Figure 3.
\begin{center}
  \includegraphics[width=12cm,height=4.5cm]{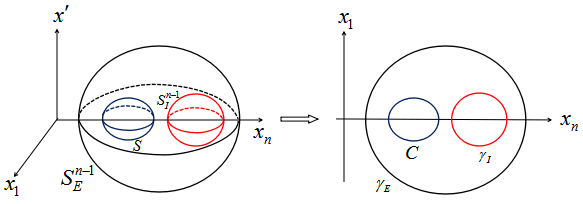}\\
  \scriptsize {\bf Figure 3} ~~The geometric distribution of critical set $K$ for case 1.
\end{center}

Because the interior of critical Jordan curve $C$ is simply connected, it is contradictory with Lemma \ref{le2.1}, so the critical set $K$ cannot enclose a subdomain of $\Omega.$

Case 2, if the critical set $K$ enclose the internal boundary $S^{n-1}_I,$ denotes by closed surface $S$, where $S$ is rotationally symmetric with respect to $x_n$ axis. we can consider the result that graphic projected onto $x_1, x_n$ plane, hence we can get the following geometry distribution of critical Jordan curve $C$ in planar nonconvex domain.
\begin{center}
  \includegraphics[width=11cm,height=3.8cm]{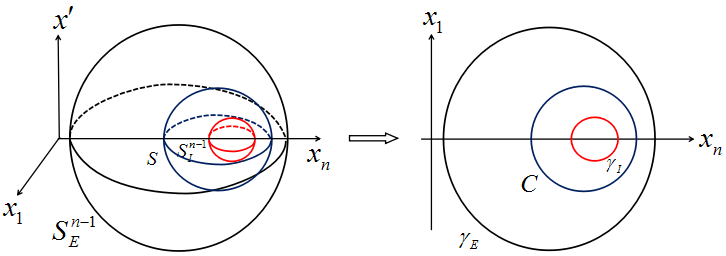}\\
  \scriptsize {\bf Figure 4} ~~The geometric distribution of critical set $K$ for case 2.
\end{center}

Due to Remark \ref{re3.4}, we have known that constant mean curvature equation (\ref{1.1}) exists only a finite number of isolated critical points in a symmetric planar eccentric circle annulus domain $\Omega$ with respect to one axis. Figure 4 shows that it is contradictory with Remark \ref{re3.4}, Therefore $K$ cannot enclose the internal boundary $S^{n-1}_I.$
\end{proof}
As an incidental consequence of Theorem \ref{th4.4} we can fully describe the geometric distribution of critical set $K$ to the constant mean curvature equation (\ref{1.1}) in a rotationally symmetric eccentric spherical annulus domain $\Omega$ with respect to an axis in $\mathbb{R}^n(n\geq3),$ as stated in the following corollary.

\begin{Corollary}\label{Co4.5} 
 We have known that the constant mean curvature equation (\ref{1.1}) exists only a finite number of isolated critical points in a planar eccentric circle annulus domain $\Omega$, where $\Omega$ is symmetric with respect to one axis. Hence we can deduce that the geometric distribution of critical set $K$ of the constant mean curvature equation (\ref{1.1}) for the case $H=0$ in a rotationally symmetric eccentric spherical annulus domain $\Omega$ with respect to an axis in $\mathbb{R}^n(n\geq3),$ without loss of generality, denotes by $x_n$ axis. Then critical set $K$ is made up of finitely many isolated critical points ($p_1,p_2,\cdots,p_l$) on $x_n$ axis and finitely many rotationally symmetric critical Jordan curves ($C_1,C_2,\cdots,C_k$) with respect to $x_n$ axis, and the geometric distribution of critical set $K$ as shown in the following Figure 5.
\end{Corollary}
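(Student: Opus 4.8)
The plan is to assemble the full description of $K$ from two ingredients already established: the non-existence statement of Theorem \ref{th4.4}, which forbids any rotationally symmetric closed critical surface, and the planar classification, which supplies only finitely many isolated critical points in a symmetric eccentric circle annulus. The bridge between them is the rotational symmetry of $u$ about the $x_n$ axis, which lets us transport the two-dimensional picture back to $\mathbb{R}^n$ by revolution. First I would record the consequence of this symmetry for the gradient: writing $x=(x',x_n)$ with $r=|x'|$, the solution takes the form $u(x)=v(r,x_n)$, so that $u_{x_k}=v_r\,x_k/r$ for $1\le k\le n-1$ and $u_{x_n}=v_{x_n}$. Hence, for $r\neq 0$ a point $x$ is critical for $u$ exactly when $v_r=0$ and $v_{x_n}=0$ at $(r,x_n)$, i.e. when the projection $(r,x_n)$ is critical for the planar solution $v$ of the reduced problem (\ref{4.11}); on the axis $\{r=0\}$ the condition collapses to $v_{x_n}=0$. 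This identifies $K$ with the set obtained by revolving, about the $x_n$ axis, the planar critical set $\widetilde K$ of $v$ in the $x_1,x_n$ cross-section.

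Next I would feed in the planar input and carry out the bookkeeping. By the planar fact recalled in the hypothesis (and in Remark \ref{re3.4}), $\widetilde K=\{q_1,\dots,q_m\}$ is a finite set of isolated points, and by Theorem \ref{th4.4} none of the revolved components is a closed surface, so $\widetilde K$ contains no curve enclosing a subdomain. I would then split $\widetilde K$ according to distance from the axis. Each $q_i$ with $r=0$ lies on the $x_n$ axis and is fixed by the revolution, hence contributes a single isolated critical point $p_i$ of $u$; each $q_j$ with $r>0$ is moved by the rotation and its orbit is a rotationally symmetric critical Jordan curve $C_j$ about the $x_n$ axis. Since the planar problem is itself symmetric about the axis, off-axis points occur in mirror pairs generating the same orbit, so the assignment of pairs to curves is well defined; as $m$ is finite, we obtain finitely many points $p_1,\dots,p_l$ and finitely many curves $C_1,\dots,C_k$, which is precisely the asserted structure.

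The step I expect to be the main obstacle is the one flagged in the introduction, namely the \emph{rationality of the projection}. One must verify that the equivalence between the $n$-dimensional condition $\nabla u=0$ and the planar condition $\nabla v=0$ is exact, with no critical points gained or lost along the singular axis $\{r=0\}$ where the polar change of variables degenerates, and that the reduced problem (\ref{4.11}) for $v$ — which differs from the genuine planar mean curvature equation by the lower-order term $\tfrac{1}{\sqrt{1+|\nabla v|^2}}\tfrac{n-2}{r}v_r$ — is still compatible with the planar classification being invoked. This is exactly why the preceding Theorem \ref{th4.4} machinery (rather than a naive two-dimensional citation) is needed to rule out an enclosing component. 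Once this equivalence is secured, the remaining counting is immediate, and the corollary follows.
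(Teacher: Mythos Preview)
Your proposal is correct and follows essentially the same approach as the paper, which presents Corollary~\ref{Co4.5} as an immediate consequence of Theorem~\ref{th4.4} together with the planar result of Remark~\ref{re3.4}, without a separate proof environment. The paper simply asserts that the revolved planar picture yields the claimed decomposition into on-axis points and rotationally symmetric Jordan curves; your version supplies the gradient computation and the on-axis versus off-axis bookkeeping that make this explicit, and correctly identifies the ``rationality of the projection'' as the only delicate point---which the paper likewise treats as already handled by the machinery of Theorems~\ref{th4.3} and~\ref{th4.4}.
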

\begin{center}
  \includegraphics[width=11cm,height=4.5cm]{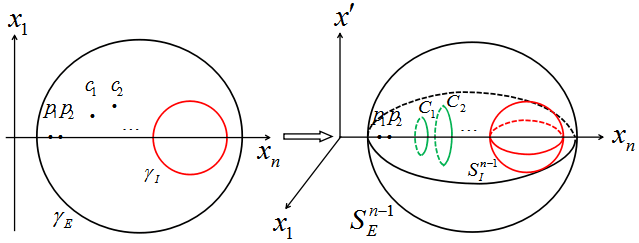}\\
 \scriptsize {\bf Figure 5} ~~The geometric distribution of critical set $K$ in an eccentric spherical annulus.
\end{center}

\begin{Remark}\label{re4.6} 
If the general quasilinear elliptic equation $Lu=\sum\limits_{i,j=1}^{n}a_{ij}(\nabla u)\frac{\partial^2 u}{\partial x_i\partial x_j}=f(u)$ satisfies the above corresponding assumptions, conditions and exists the corresponding symmetric solution as in Theorem \ref{th4.2}, Theorem \ref{th4.3} and Theorem \ref{th4.4}, then the general quasilinear elliptic equation has the same results about the geometric distribution of critical set $K$ as in Theorem \ref{th4.2}, Theorem \ref{th4.3} and Theorem \ref{th4.4}.
\end{Remark}

\noindent  \textbf{Acknowledgement.} The first author is very grateful to his advisor Professor Xiaoping Yang for his expert guidances and useful conversations.

\end{document}